\newcommand{\Vol}{\mathrm{Vol}}
\newcommand{\R}{\mathbb{R}}
\renewcommand{\P}{\mathbb{P}}
\newcommand{\supp}{\mathrm{supp}}
\newcommand{\Per}{\mathrm{Per}}
\newcommand{\E}{\mathbb{E}}
\renewcommand{\ge}{\geqslant}
\renewcommand{\le}{\leqslant}
\renewcommand{\leq}{\leqslant}
\newcommand{\BV}{\mathrm{BV}}
\renewcommand{\epsilon}{\varepsilon}
\numberwithin{equation}{section}
\newtheorem{theorem}{Theorem}[section]
\newtheorem{assumption}{Assumption}[section]
\newtheorem{proposition}[theorem]{Proposition}
\newtheorem{lemma}[theorem]{Lemma}
\theoremstyle{definition}
\theoremstyle{remark}
\newtheorem{remark}[theorem]{Remark}
\newtheorem{notation}{Notation}[section]
\title[SHC of isotropic processes]{Small time asymptotics of spectral heat content of isotropic processes}
\author[Rohan Sarkar]{Rohan Sarkar{$^{\dag }$}}
\address{ Department of Mathematics and Statistics\\
	Binghamton University\\
	Binghamton, NY 13902,  U.S.A.}
\email{rsarkar2@binghamton.edu}
\keywords{spectral heat content; isotropic processes; self-similarity; asymptotic behavior}
\subjclass[2020]{Primary: 60J45; 60G07; 60G15; 60G51. Secondary: 45K05}
\begin{document}
	\begin{abstract}
		The spectral heat content of a domain $\Omega\subset\R^d$ corresponding to a $d$-dimensional stochastic process $X=(X_t)_{t\ge 0}$ is defined as \[Q^{X}_\Omega(t)=\int_{\R^d} \P_x(\tau^X_\Omega>t)dx,\] where $\tau^X_\Omega$ is the first exit time of $X$ from $\Omega$. We provide a novel technique for proving small time asymptotic of spectral heat content for any translation invariant isotropic process satisfying negligible tail probability condition. As a consequence, we recover several existing results in the context of L\'evy processes and Gaussian processes, and provide spectral heat content asymptotics for a class of $\alpha$-stable L\'evy processes time-changed by right inverse of positive, increasing, self-similar Markov processes. The latter has connection to some Cauchy problems that are non-local in both time and space.
	\end{abstract}
	
	\maketitle
	\section{Introduction and main result}
	\subsection{Main result}
	Let $X=(X_t)_{t\ge 0}$ be a stochastic process on $\R^d$ with c\`adl\`ag paths, and let $\P_x$ denotes the probability measure when $X_0=x$.
	For any bounded open set $\Omega\subset \R^d$, the \emph{spectral heat content} (SHC) of $X$ in the domain $\Omega$ is defined as
	\begin{align*}
		Q^X_\Omega(t) =\int_{\Omega} \P_x(\tau^X_\Omega>t) dx,
	\end{align*}
	where $\tau^X_\Omega = \inf\{t\ge 0: X_t\notin \Omega\}$. Since $X$ has c\`adl\`ag paths, it follows that 
	\begin{align*}
		\lim_{t\downarrow 0} Q^X_{\Omega}(t) =\Vol(\Omega),
	\end{align*}
	where $\Vol(\Omega)$ denotes the volume of $\Omega$. In this article, we consider translation invariant isotropic processes $X=(X_t)_{t\ge 0}$ on $\R^d$, that is, for any $x\in\R^d$, and $A\in\mathcal{B}(\R^d)$,
	\begin{align*}
		\P_x(X_t\in A)&=\P_0(X_t+x\in A), \\
		\P_0(X_t\in A)&=\P_0(UX_t\in A) 
	\end{align*}
	for all $d\times d$ orthogonal matrices $U$. Writing $X_t=(X^{(1)}_t, \ldots, X^{(d)}_t)$,
	for any $t>0$, we define
	\begin{equation}\label{eq:mu_sigma}
		\begin{aligned}
			\mu(t)&=\E_0\left[\sup_{0\le s\le t}X^{(1)}_s\wedge 1\right].
		\end{aligned}
	\end{equation}
	From right continuity of the sample paths we have
	\begin{align*}
		\lim_{t\downarrow 0} \mu(t)=0.
	\end{align*}
	Throughout the article, we assume the negligibility of the tail probability in the following sense.
	\begin{assumption}\label{assump1}
		For any $\varepsilon>0$,
		\begin{align*}
			\lim_{t\downarrow 0} \frac{1}{\mu(t)}\P_0\left(\sup_{0\le s\le t} X^{(1)}_s>\varepsilon\right)=0.
		\end{align*}
	\end{assumption}
	The aim of this paper is to prove the following asymptotic behavior of the spectral heat content of $X$.
	\begin{theorem}\label{thm:main} Let $\Omega$ be a bounded open subset of $\R^d$ with $C^{1,1}$ boundary, and $X=(X_t)_{t\ge 0}$ be a translation invariant isotropic stochastic process satisfying Assumption~\ref{assump1}. Then,
		\begin{align}\label{eq:spectral_heat_asymp}
			\lim_{t\downarrow 0}\frac{\Vol(\Omega)-Q^X_\Omega(t)}{\mu(t)}=\Per(\Omega),
		\end{align}
		where $\Per(\Omega)$ denotes the perimeter of $\Omega$.
	\end{theorem}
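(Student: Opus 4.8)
\smallskip
\noindent\emph{Proof strategy.}
I would start from the layer--cake identity
\[
  \Vol(\Omega)-Q^X_\Omega(t)=\int_\Omega \P_x\big(\tau^X_\Omega\le t\big)\,dx
  =\int_\Omega \P_0\big(\exists\, s\in[0,t]:\ x+X_s\in\Omega^c\big)\,dx ,
\]
the last equality because $\Omega^c$ is closed and $X$ is right--continuous. Write $M_t=\sup_{0\le s\le t}X^{(1)}_s\ (\ge 0)$, so $\mu(t)=\E_0[M_t\wedge 1]$. First I would record three consequences of Assumption~\ref{assump1}, all uniform in the spatial variable. (i) Decomposing $\{\sup_{s\le t}|X_s|\ge c\}$ into the $2d$ events $\{\sup_{s\le t}(\pm X^{(j)}_s)\ge c/\sqrt d\}$ and using that, by isotropy, each $(\pm X^{(j)}_s)_{s\le t}$ has the law of $(X^{(1)}_s)_{s\le t}$, one gets $\P_0(\sup_{s\le t}|X_s|\ge c)\le 2d\,\P_0(M_t\ge c/\sqrt d)=o(\mu(t))$ for every fixed $c>0$. (ii) Since $\mu(t)-\E_0[M_t\wedge\delta]=\E_0[(M_t-\delta)^+\wedge(1-\delta)]\le \P_0(M_t>\delta)$, one has $\E_0[M_t\wedge\delta]=(1+o(1))\mu(t)$ for each fixed $\delta\in(0,1]$. (iii) By the layer--cake formula and (i), $\E_0[(\sup_{s\le t}|X_s|)^2\wedge a]=\int_0^{\sqrt a}2v\,\P_0(\sup_{s\le t}|X_s|>v)\,dv\le C(d)\sqrt a\,\mu(t)$ for all $a\in(0,1]$.

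Next I would fix the uniform interior/exterior ball radius $\rho>0$ of the $C^{1,1}$ domain $\Omega$, fix a sufficiently small $\delta>0$ (in particular $\delta<\rho/4$, $\rho\delta\le 1$), and split $\Omega=\Omega_\delta\cup(\Omega\setminus\Omega_\delta)$ with $\Omega_\delta=\{x\in\Omega:\dist(x,\partial\Omega)<\delta\}$. For $x\notin\Omega_\delta$ one has $B(x,\delta)\subset\Omega$, so $\P_x(\tau^X_\Omega\le t)\le\P_0(\sup_{s\le t}|X_s|\ge\delta)$ and the contribution of $\Omega\setminus\Omega_\delta$ is at most $\Vol(\Omega)\,\P_0(\sup_{s\le t}|X_s|\ge\delta)=o(\mu(t))$ by (i). For $x\in\Omega_\delta$ let $z\in\partial\Omega$ be the nearest boundary point, $\nu$ the outer unit normal at $z$, and $r=\dist(x,\partial\Omega)$, so $x=z-r\nu$; put $X_s^\perp=X_s-(X_s\cdot\nu)\nu$ and $N_t=\sup_{s\le t}|X_s^\perp|^2\le(\sup_{s\le t}|X_s|)^2$. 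Using the interior ball $B(z-\rho\nu,\rho)\subset\Omega$, an exit from $\Omega$ forces an exit from this ball, which translates to $(X_s\cdot\nu+\rho-r)^2+|X_s^\perp|^2>\rho^2$ for some $s\le t$; on the event $\{\sup_{s\le t}|X_s|<\rho/4\}$, whose complement has $\P_0$--probability $o(\mu(t))$, this forces $X_s\cdot\nu+\tfrac1\rho|X_s^\perp|^2>r$. Using the exterior ball $B(z+\rho\nu,\rho)\subset\Omega^c$, an entry into it forces an exit from $\Omega$, and $x+X_s\in B(z+\rho\nu,\rho)$ is equivalent to $|X_s^\perp|^2<(X_s\cdot\nu-r)(2\rho-X_s\cdot\nu+r)$, for which, on $\{\sup_{s\le t}X_s\cdot\nu<\rho\}$, the condition $X_s\cdot\nu-\tfrac1\rho|X_s^\perp|^2>r$ is sufficient. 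Thus, setting
\[
  Y_t=\sup_{s\le t}\Big(X_s\cdot\nu-\tfrac1\rho|X_s^\perp|^2\Big)\ \le\ \sup_{s\le t}X_s\cdot\nu\ \le\ Z_t=\sup_{s\le t}\Big(X_s\cdot\nu+\tfrac1\rho|X_s^\perp|^2\Big),
\]
I obtain, uniformly in $x\in\Omega_\delta$,
\[
  \P_0(Y_t>r)-o(\mu(t))\ \le\ \P_x\big(\tau^X_\Omega\le t\big)\ \le\ \P_0(Z_t>r)+o(\mu(t)),
\]
where, by isotropy, the laws of $Y_t,Z_t$ are independent of $x$, the variable $\sup_{s\le t}X_s\cdot\nu$ has the law of $M_t$, and $0\le \sup_{s\le t}X_s\cdot\nu-Y_t\le\tfrac1\rho N_t$, $\,0\le Z_t-\sup_{s\le t}X_s\cdot\nu\le\tfrac1\rho N_t$.

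Then I would integrate over the collar with the coarea formula applied to the Lipschitz function $\dist(\cdot,\partial\Omega)$: writing $P(r):=\mathcal H^{d-1}(\{x\in\Omega:\dist(x,\partial\Omega)=r\})$, one has $\int_{\Omega_\delta}f(\dist(x,\partial\Omega))\,dx=\int_0^\delta f(r)P(r)\,dr$, where (standard for $C^{1,1}$ domains, via the map $z\mapsto z-r\nu(z)$ whose Jacobian tends to $1$) $P$ is bounded on $[0,\delta]$ and $P(r)\to\Per(\Omega)=\mathcal H^{d-1}(\partial\Omega)$ as $r\to0^+$. Using $\int_0^\delta\P_0(Z_t>r)\,dr=\E_0[Z_t\wedge\delta]$ (and likewise for $Y_t$), the previous display yields
\[
  \big(\inf_{[0,\delta]}P\big)\,\E_0[Y_t\wedge\delta]-o(\mu(t))\ \le\ \int_{\Omega_\delta}\P_x\big(\tau^X_\Omega\le t\big)\,dx\ \le\ \big(\sup_{[0,\delta]}P\big)\,\E_0[Z_t\wedge\delta]+o(\mu(t)).
\]
From the sandwiching above, together with (ii)--(iii) (note $\E_0[(\tfrac1\rho N_t)\wedge\delta]=\tfrac1\rho\E_0[N_t\wedge\rho\delta]\le\tfrac1\rho\E_0[(\sup_{s\le t}|X_s|)^2\wedge\rho\delta]\le C(d)\sqrt{\delta/\rho}\,\mu(t)$), I get $\E_0[Z_t\wedge\delta]\le\big(1+o(1)+C(d)\sqrt{\delta/\rho}\big)\mu(t)$ and $\E_0[Y_t\wedge\delta]\ge\big(1+o(1)-C(d)\sqrt{\delta/\rho}\big)\mu(t)$. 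Adding the estimate for $\Omega\setminus\Omega_\delta$ and dividing by $\mu(t)$,
\[
  \big(\inf_{[0,\delta]}P\big)\big(1-C(d)\sqrt{\delta/\rho}\big)\ \le\ \liminf_{t\to0}\frac{\Vol(\Omega)-Q^X_\Omega(t)}{\mu(t)},
\]
\[
  \limsup_{t\to0}\frac{\Vol(\Omega)-Q^X_\Omega(t)}{\mu(t)}\ \le\ \big(\sup_{[0,\delta]}P\big)\big(1+C(d)\sqrt{\delta/\rho}\big),
\]
and letting $\delta\to0^+$ both bounds tend to $\Per(\Omega)$, which is \eqref{eq:spectral_heat_asymp}.

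The main obstacle is the collar estimate: one must replace the genuinely $d$--dimensional exit event from the curved domain $\Omega$ by the one--dimensional event $\{\sup_{s\le t}X_s\cdot\nu>r\}$ with errors that are not merely controlled but actually vanish after integration against $P$. The mechanism is that the uniform ball conditions squeeze the discrepancy between exiting $\Omega$ and exiting the tangent half--space at the nearest boundary point into the lower--order correction $\pm\tfrac1\rho|X_s^\perp|^2$, whose truncated expectation is $O(\sqrt{\delta}\,\mu(t))$; a correction by a fixed constant would be fatal instead (already for Brownian motion, $\int_0^\gamma\P_0(M_t>r)\,dr\sim\mu(t)$ for every $\gamma>0$, so a shift $r\mapsto r\pm\gamma$ would destroy the asymptotics). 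The remaining ingredients --- the coordinatewise union bound, the truncation bookkeeping around the level $1$ in $\mu(t)$, and the coarea reduction to level sets of the distance function --- are routine, provided one keeps all the $o(\mu(t))$ error terms uniform over $x\in\Omega_\delta$ so that they survive the integration against the bounded density $P$.
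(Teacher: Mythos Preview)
Your proof is correct, but it takes a genuinely different route from the paper. You use the classical interior/exterior ball geometry directly: for each $x$ in the boundary collar you sandwich $\{\tau^X_\Omega\le t\}$ between exits from the tangent interior ball and entries into the tangent exterior ball, reduce both to events of the form $\sup_{s\le t}(X_s\cdot\nu\pm\tfrac{1}{\rho}|X_s^\perp|^2)>r$, and then integrate in $r$ via the coarea formula for the distance function. This is essentially the strategy of the earlier works the paper cites (Park--Song, Kobayashi--Park), adapted to the general isotropic setting by your estimates (i)--(iii) and the key observation that the curvature correction $\tfrac{1}{\rho}|X_s^\perp|^2$ contributes only $O(\sqrt{\delta}\,\mu(t))$ after truncation.

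The paper instead proceeds in two different pieces. For the lower bound it introduces the functional $Q^X_f(t)=\int\E_x[\inf_{s\le t}f(X_s)]\,dx$, proves the asymptotic $\mathcal{R}^X_f(t)/\mu(t)\to\int|\nabla f|$ for $f\in C^{1,\kappa}_c$ via a first-order Taylor expansion, and then mollifies $\mathbbm{1}_\Omega$; since $\mathcal{R}^X_{f*\rho_\epsilon}\le\mathcal{R}^X_f$, lower semicontinuity of total variation gives the lower bound for \emph{any} bounded open set of finite perimeter, not just $C^{1,1}$ ones. For the upper bound it writes $\Omega=\phi^{-1}(0,\infty)$ with $\phi$ a cutoff of the signed distance (so $|\nabla\phi|=1$ near $\partial\Omega$), Taylor-expands $\phi(X_s)$, and applies the coarea formula to the level sets of $\phi$ rather than of the distance; this yields a bound $\limsup\le(\sup_{\partial\Omega}|\nabla\phi|/\inf_{\partial\Omega}|\nabla\phi|)\Per(\Omega)$ valid for any $C^{1,\kappa}$ level-set description of $\Omega$, and equals $\Per(\Omega)$ in the $C^{1,1}$ case.

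What each approach buys: yours is shorter and more geometric for the specific $C^{1,1}$ target, and treats the upper and lower bounds symmetrically. The paper's route is less elementary but strictly more general---the lower bound holds for arbitrary finite-perimeter sets, and the upper bound extends (with a multiplicative constant) to $C^{1,\kappa}$ boundaries where the uniform ball condition, and hence your sandwiching, fails.
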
 
	\begin{remark}
		The $C^{1,1}$ regularity condition on the boundary of $\Omega$ inherits from the differentiability of the signed distance function from the boundary, which is crucial in our proof of the above theorem. We refer to Remark~\ref{rem:C11} for details.
	\end{remark}
	Assumption~\ref{assump1} is satisfied by a wide range of isotropic processes. The next result provides sufficient conditions for Assumption~\ref{assump1} to hold in terms of the moments of the supremum process $\sup_{0\le s\le t} X^{(1)}_s$. For the ease of writing, we introduce the following notation to be used throughout the rest.
	\begin{notation}
		For any stochastic process $Z=(Z_t)_{t\ge 0}$ on $\R$, we denote the running supremum of $Z$ by $\overline{Z}$, that is,
		\begin{align*}
			\overline{Z}_t=\sup\{Z_s: 0\le s\le t\}.
		\end{align*}
	\end{notation}
	\begin{proposition}\label{corr1}
		Let $\Omega$ be a bounded, open set with $C^{1,1}$ boundary and $X=(X_t)_{t\ge 0}$ be a translation invariant isotropic process such that 
		\begin{align*}
			\E_0\left[\left(\overline{X}^{(1)}_T\right)^p\right]<\infty
		\end{align*}
		for some $T>0$ and $p>1$, and 
		\begin{align*}
			\lim_{t\downarrow 0}\frac{\E_0\left[\left(\overline{X}^{(1)}_t\right)^p\right]}{\E_0\left[\overline{X}^{(1)}_t\right]}=0.
		\end{align*}
		Then, 
		\begin{align*}
				\lim_{t\downarrow 0}\frac{\Vol(\Omega)-Q^X_\Omega(t)}{\E_0\left[\overline{X}^{(1)}_t\right]}=\Per(\Omega).
		\end{align*}
	\end{proposition}
	\subsection{Previous wokrs} The spectral heat content in the classical sense is defined via heat equation with Dirichlet boundary condition on $\Omega$. More precisely, if $u(t,x)$ solves the PDE 
	\begin{align*}
		\frac{\partial}{\partial t} u(x,t)&=\Delta_x u(x,t) \quad \mbox{in $\Omega\times (0,\infty)$} \\
		u(x,t)&= 0 \quad \mbox{for $(x,t)\in \partial\Omega\times (0,\infty) $} \\
		u(x,0) &= 1 \quad \mbox{for $x\in\Omega$},
	\end{align*}
	then the spectral heat content of $\Omega$ is given by
	\begin{align*}
		Q_\Omega(t)=\int_\Omega u(x,t) dx.
	\end{align*}
	From standard theory of Markov processes, one can write $Q_\Omega(t)=Q^B_\Omega(t)$, where $B$ is the standard Brownian motion on $\R^d$.
	Small time asymptotics of $Q^B_\Omega(t)$ was first proved by van den Berg and Davies, see \cite[Theorem~6.2]{BergDavies1989}, for domains with $R$-smooth boundary, and subsequently, van den Berg and LeGall \cite[Theorem~1.2]{BergLeGall1994} obtained a second order expansion of $Q^B_\Omega(t)$ for domains with $C^3$ boundary as follows:
	\begin{align*}
		Q^B_\Omega(t)=\Vol(\Omega)-\sqrt{\frac{2t}{\pi}}\sigma(\partial\Omega)+\frac{t}{4}\int_{\partial\Omega} H_\Omega d\sigma +\mathrm{O}(t^{3/2}),
	\end{align*}
	where $\sigma$ is the surface measure on $\partial\Omega$, and $H_\Omega$ is the mean curvature at the boundary. The above formula was further generalized to Riemannian manifolds by van den Berg and Gilkey \cite{BergGilkey1994} for domains with smooth boundary. Using PDE techniques, Savo \cite{Savo1999} proved an asymptotic expansion of $Q_\Omega(t)$ of arbitrary order for any domain with smooth boundary in a Riemannian manifold.
	
	While the asymptotics for Brownian motion is known for several years, results for jump processes have been obtained quite recently. In the one-dimensional case, SHC asymptotic for symmetric $\alpha$-stable process was obtained by Valverde \cite{Valverde2016}, and later generalized to isotropic $\alpha$-stable processes on $\R^d$ by Park and Song \cite{ParkSong2022}. We refer to \cite{KobayashiKeiPark2024} for SHC of isotropic L\'evy processes with regularly varying L\'evy-Khintchine exponents, and to a recent article \cite{LeePark2025} for symmetric L\'evy processes (may not be isotropic) with some conditions on the L\'evy-Khintchine exponent. When the L\'evy process has bounded variation, the small time asymptotic of SHC was proved in \cite{GrzywnyParkSong2019}, and in this case, one recovers the fractional perimeter in the limit. Beyond L\'evy processes, we refer to \cite{KobayashiKeiPark2025, KobayashiPark2023, KobayashiPark2022} for separate treatments of SHC asymptotics of time-changed killed Brownian motion, time-changed killed $\alpha$-stable processes, and Gaussian processes in $\R^d$.

	\subsection{Our contribution} In the present article, we propose a novel approach based on simple facts from geometric measure theory for proving SHC asymptotic for any translation invariant isotropic process satisfying Assumption~\ref{assump1}. This assumption holds true for any isotropic L\'evy process with regularly varying L\'evy-Khintchine exponent, and Gaussian processes. Moreover, in Section~\ref{sec:time_changed_BM}, we provide a wide range of examples of time-changed isotropic $\alpha$-stable processes that satisfy Assumption~\ref{assump1}. As a consequence, we recover the results in \cite{ParkSong2022, KobayashiPark2022, KobayashiPark2023, KobayashiKeiPark2024}, and in Section~\ref{sss:self_similar}, we provide examples where the SHC is related to some integro-partial differential equations that are non-local in both time and space. Another important aspect of our method is that it requires very minimal assumption on the isotropic process in terms of the tail probability behavior of the supremum of the one dimensional marginals. This observation led us to adapt this technique for heat content of fractional sub-Laplacian on Carnot groups and we refer the interested reader to \cite{Sarkar2026}.
	
	Our techniques can be applied to bounded domains with boundary being level sets of a $C^{1,\kappa}$ functions for $0<\kappa<1$. In all the aforementioned references \cite{BergDavies1989,BergGilkey1994,BergLeGall1994,ParkSong2022,KobayashiKeiPark2024,KobayashiKeiPark2025}, the assumption of $C^{1,1}$ regularity is crucial as the proofs rely heavily on the uniform interior and exterior ball condition of the domain. In fact, from \cite[Corollary~2]{LewickaPeres2020}, the uniform interior and exterior ball condition is equivalent to the $C^{1,1}$ smoothness of the boundary. For $C^{1,\kappa}$ boundary with $0<\kappa<1$, the uniform ball condition does not hold. In Theorem~\ref{thm:main2} we provide upper and lower bound of the SHC for domains with boundaries given by level sets of $C^{1,\kappa}$ functions with $0<\kappa<1$. The question of validity of \eqref{eq:spectral_heat_asymp} for such domains still remains open.
	
	Let us now briefly describe the main idea behind our approach to prove Theorem~\ref{thm:main}. For any nonnegative integrable function $f:\R^d\to\R$ we define
	\begin{align}\label{eq:functional_spectral_heat}
		Q^X_f(t)=\int_{\R^d} \E_x\left[\inf_{0\le s\le t} f(X_s)\right] dx.
	\end{align}
	In particular, when $f=\mathbbm{1}_\Omega$, $Q^X_f(t)=Q^X_\Omega(t)$ for all $t\ge 0$. Using Taylor expansion of $f$, we first obtain the asymptotic of $Q^X_f$ when $f$ is a compactly supported smooth function, and the lower bound of the limit in \eqref{eq:spectral_heat_asymp} is proved using an approximation argument. It turns out that the lower bound of \eqref{eq:spectral_heat_asymp} holds for any bounded, open subset with finite perimeter, see Proposition~\ref{prop:liminf}. The key idea to derive the upper bound of the limit in \eqref{eq:spectral_heat_asymp} stems from the observation that any bounded domain with $C^{1,1}$ boundary can be expressed as $\phi^{-1}(0,\infty)$ for some bounded $C^{1,1}$ function $\phi$ with $|\nabla \phi|$ being constant on the boundary.  
	
	The rest of the paper is organized as follows: in Section~\ref{sec:2} we provide examples including isotropic L\'evy processes, Gaussian processes, and a class of time-changed $\alpha$-stable isotropic L\'evy processes for which spectral heat content asymptotics follow directly from our main result. Section~\ref{sec:3} provides some preliminaries about functions of bounded variation and regularity of boundary. In Section~\ref{sec:4}, we prove the asymptotic of the functional of the spectral heat content defined in \eqref{eq:functional_spectral_heat}. Finally, Theorem~\ref{thm:main} is proved in Section~\ref{sec:6}.

	\section{Examples}\label{sec:2}
	\subsection{Isotropic L\'evy processes} We consider the class of L\'evy processes studied in \cite{KobayashiKeiPark2024}. Let $X=(X_t)_{t\ge 0}$ be an isotropic L\'evy process with L\'evy-Khintchine exponent $\Psi(\xi)=\psi(|\xi|)$ for any $\xi\in\R^d$, and assume that $\psi:[0,\infty)\to [0,\infty)$ is regularly varying with exponent $\beta\in [1,2]$. Then, by \cite[Equation~(3.2)]{Pruitt1981}, for any $\epsilon>0$, 
	\begin{align*}
		\P_0\left(\overline{X}^{(1)}_t>\epsilon\right)\le c_\epsilon t
	\end{align*}
	for some constant $c_\epsilon>0$. On the other hand, using the bounds in \cite[Equation~(3.8)]{LeePark2025}, we have $\lim_{t\downarrow 0}\frac{t}{\mu(t)}=0$,
	which shows that Assumption~\ref{assump1} holds. Therefore, Theorem~\ref{thm:main} holds for $X$, and we recover \cite[Theorem~2.1(1)]{LeePark2025} for isotropic L\'evy processes. In particular, when $X$ is the isotropic $\alpha$-stable L\'evy process, that is, $\Psi(\xi)=|\xi|^\alpha$, as $t\downarrow 0$ we have
	\begin{align*}
		\mu(t)\sim\begin{cases}
			t^{1/\alpha}\E[\overline{X}^{(1)}_t] & \mbox{if $1<\alpha\le 2$},\\
			\frac{1}{\pi}t\log(1/t) & \mbox{if $\alpha=1$}.
		\end{cases}
	\end{align*}
	For $\alpha\in (1,2]$, the asymptotic behavior of $\mu(t)$ above follows directly from the self-similarity of $\alpha$-stable process, and for $\alpha=1$, we refer to \cite[Proposition~4.3(i)]{Valverde2016}.
	This recovers \cite[Theorem~1.1]{ParkSong2022} when $\alpha\ge 1$.
	\subsection{Gaussian processes} Consider a translation invariant Gaussian process $X=(X_t)_{t\ge 0}$ on $\R^d$ with independent and identically distributed components and assume that $X$ has c\`adl\`ag sample paths. For any $t>0$, let 
	\begin{align*}
		m(t)=\E_0\left[\overline{X}^{(1)}_t\right], \quad 
		\sigma(t)^2=\sup_{0\le s\le t}\mathrm{Var}\left[(X^{(1)}_s)^2\right].
	\end{align*}
	We assume that $m(t)<\infty$ for all $t\in [0,1]$, which is known to be equivalent to the condition that $\P_0(\overline{X}^{(1)}_1<\infty)=1$. 
By Borell-TIS inequality, see \cite[Theorem~2.1.1]{AdlerTaylorBook}, for any $x>0$ 
	\begin{equation}\label{eq:BTIS}
		\begin{aligned}
			\P_0\left(\left|\overline{X}^{(1)}_t-m(t)\right|>x\right)&\le 2 e^{-x^2/2\sigma(t)^2},
		\end{aligned}
	\end{equation}
	and hence 
	\begin{align*}
		\mathrm{Var}\left[\overline{X}^{(1)}_t\right]\le\int_0^\infty 4x e^{-x^2/2\sigma(t)^2}=4\sigma(t)^2.
	\end{align*}
	Therefore, $\E_0\left[\left(\overline{X}^{(1)}_t\right)^2\right]\le m(t)^2+4\sigma(t)^2$.
	From \cite[Equation~(29)]{KobayashiKeiPark2024} it is known that $\lim_{t\downarrow 0}{\sigma(t)^2}/{m(t)}=0$. 
	Applying Proposition~\ref{corr1}, we obtain the following result.
	\begin{proposition}\label{prop:gaussian_assymp}
		If $\Omega$ is a bounded open set with $C^{1,1}$ boundary then,
		\begin{align}\label{eq:gaussian_asymp}
			\lim_{t\downarrow 0}\frac{\Vol(\Omega)-Q^X_\Omega(t)}{m(t)}=\Per(\Omega).
		\end{align}
	\end{proposition}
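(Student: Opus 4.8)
The plan is to obtain this proposition directly from Corollary~\ref{corr1} by verifying its hypotheses with $\alpha=2$; no machinery beyond what already went into Theorem~\ref{thm:main} should be required.

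First I would confirm that $X$ belongs to the class covered by Corollary~\ref{corr1}, i.e.\ that it is a translation invariant isotropic process. Translation invariance is built into the definition of $\P_x$ from $\P_0$. For isotropy, I would use that the components are i.i.d.\ centered Gaussian with $X_0=0$: for each fixed $t>0$ the vector $X_t$ is then a centered Gaussian vector on $\R^d$ with covariance matrix $\mathrm{Var}[X^{(1)}_t]\,I_d$, and every centered Gaussian law with a scalar covariance matrix is invariant under the orthogonal group, so $\P_0(X_t\in A)=\P_0(UX_t\in A)$ for all orthogonal $U$ and all Borel sets $A$.

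Next I would verify the two moment conditions of Corollary~\ref{corr1} for $\alpha=2$. Starting from the Borell--TIS bound~\eqref{eq:BTIS}, one gets $\E_0[(\sup_{0\le s\le t}X^{(1)}_s)^2]\le m(t)^2+4\sigma^2(t)$, whose right-hand side is finite for every $T>0$ since $m(T)<\infty$ and $\sigma^2(T)<\infty$; this gives the finite-moment hypothesis. For the vanishing-ratio condition, I would divide this bound by $m(t)$,
\begin{align*}
\frac{\E_0\left[\left(\sup_{0\le s\le t}X^{(1)}_s\right)^2\right]}{m(t)}\le m(t)+\frac{4\sigma^2(t)}{m(t)},
\end{align*}
and observe that $m(t)\to 0$ as $t\to 0$ (the nonnegative random variables $\sup_{0\le s\le t}X^{(1)}_s$ decrease to $X^{(1)}_0=0$ by right continuity and are dominated by the integrable $\sup_{0\le s\le T}X^{(1)}_s$), while $\sigma^2(t)/m(t)\to 0$ by \cite[Equation~(29)]{KobayashiKeiPark2024}; since $m(t)>0$ for small $t>0$ whenever $X$ is nondegenerate, the division is justified and the ratio tends to $0$. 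An application of Corollary~\ref{corr1} with $\alpha=2$ then yields exactly~\eqref{eq:gaussian_asymp}.

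I do not anticipate a real obstacle here, since the entire analytic content sits inside Corollary~\ref{corr1} and, ultimately, Theorem~\ref{thm:main}. The only points I would be careful to record explicitly are the standing convention that $X$ is centered with $X_0=0$ (which is what makes both the isotropy and $m(t)\to 0$ work) and the precise meaning of the variance parameter entering the Borell--TIS inequality, namely $\sigma^2(t)=\sup_{0\le s\le t}\mathrm{Var}[X^{(1)}_s]$.
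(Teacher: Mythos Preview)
Your proposal is correct and follows essentially the same route as the paper: derive the second-moment bound $\E_0[(\sup_{0\le s\le t}X^{(1)}_s)^2]\le m(t)^2+4\sigma^2(t)$ from Borell--TIS, combine it with $\sigma^2(t)/m(t)\to 0$ from \cite[Equation~(29)]{KobayashiKeiPark2024}, and invoke Corollary~\ref{corr1} with $\alpha=2$. The only differences are expository---you spell out isotropy and $m(t)\to 0$ explicitly, whereas the paper leaves these implicit.
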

	\begin{remark}
		In \cite[Theorem~4.4]{KobayashiKeiPark2024} under the condition that the rescaled process $Y^{(t)}_u=\mu^{-1}_t X^{(1)}_{tu}$ converges weakly to a Gaussian process $(Y_u)_{u\ge 0}$ as $t\downarrow 0$ in Skorokhod $J_1$ topology, it is proved that
		\begin{align*}
			\lim_{t\downarrow 0}\frac{\Vol(\Omega)-Q^X_\Omega(t)}{m(t)}=\E\left[\overline{Y}_{\!1}\right]\Per(\Omega),
		\end{align*}
		with $\E[\overline{Y}_{\!1}]\le 1$. Proposition~\ref{prop:gaussian_assymp} does not require the convergence of the rescaled Gaussian process, and hence it strengthens \cite[Theorem~4.4]{KobayashiKeiPark2024}.
	\end{remark}
	
	\subsection{Time-changed $\alpha$-stable process}\label{sec:time_changed_BM} Consider the $d$-dimensional $\alpha$-stable isotropic L\'evy process $Y=(Y_t)_{t\ge 0}$ with $\alpha\in (1,2]$, and a nonnegative, increasing process $U=(U_t)_{t\ge 0}$ with c\`adl\`ag paths which is independent of $Y$ and $U_0=0$. Define $X_t=Y_{U_t}$. Then, $X=(X_t)_{t\ge 0}$ is translation invariant and isotropic. Let us assume that for some $p>1$,
	\begin{equation}\label{eq:U_conditions}
		\begin{aligned}
			&\E[U^{p/\alpha}_t]<\infty \quad \text{for all $t\ge 0$},  \quad \lim_{t\downarrow 0}\frac{\E[U^{p/\alpha}_t]}{\E[U^{1/\alpha}_t]}=0.
		\end{aligned}
	\end{equation}
	Then we have the following result.
	\begin{proposition}\label{prop:U_limits}
		For any bounded open set $\Omega$ with $C^{1,1}$ boundary we have
		\begin{align}\label{eq:U_limit}
			\lim_{t\downarrow 0}\frac{\Vol(\Omega)-Q^X_\Omega(t)}{m(t)}=\Per(\Omega),
		\end{align}
		where $m(t)=\E_0\left[\overline{X}^{(1)}_t\right]$. In particular when $U_t$ has continuous paths, we have $m(t)=\E[\overline{Y}^{(1)}_1]\E[U^{1/\alpha}_t]$.
	\end{proposition}
	\begin{remark}
		In the context of time-changed L\'evy processes, SHC of the time-changed process and the killed time-changed process are different in general. This happens as the paths of $U$ are discontinuous in general. However when $U$ has continuous paths, above two notions of SHC coincide since
	\begin{align*}
		\P_x\left(\tau^X_\Omega>t\right)=\P_x\left(\tau^Y_\Omega>U_t\right)
	\end{align*}
	for all $x\in\Omega$ and $t\ge 0$. We refer to \cite[p.~5]{KobayashiPark2023} (see also \cite[p.~580]{SongVondracek2003}) for detailed discussions on the comparison between killed time-changed and time-changed killed processes.
	\end{remark}
	\begin{proof} 
		Since $Y$ is an $\alpha$-stable L\'evy process, for any $0\le q<\alpha$, $\E[\sup_{0\le s\le t}|Y_t|^q]<\infty$, see for instance \cite[Theorem~25.18]{Sato_Book}. By $1/\alpha$--self-similarity of $Y$, we have $\E[(\overline{Y}^{(1)}_t)^q]= t^{q/\alpha}\E[(\overline{Y}^{(1)}_1)^q]$. Since $U$ is increasing and right continuous, for any $t\ge 0$,
		\begin{align*}
			\overline{X}^{(1)}_t \le \sup_{0\le s\le U_t} Y^{(1)}_{s},
		\end{align*}
		which shows that 
		\begin{align*}
			\E\left[\left(\overline{X}^{(1)}_t\right)^q\right]\le \E_0\left[\left(\overline{Y}^{(1)}_1\right)^q\right]\E\left[U^{q/\alpha}_t\right]<\infty
		\end{align*}
		for $0\le q<p\wedge\alpha$.
		On the other hand, using symmetry of the distribution of $X^{(1)}$, we note that for any $u>0$,
		\begin{align*}
			\P_0\left(\overline{X}^{(1)}_t\ge u\right)\ge \frac{1}{2}\P_0\left(|X^{(1)}_t|\ge u\right),
		\end{align*}
		which implies that for any $1\le q<p\wedge\alpha$,
		\begin{align*}
			\E_0\left[\left(\overline{X}^{(1)}_t\right)^q\right]\ge \frac{1}{2}\E_0\left[|X^{(1)}_t|^q\right]=\frac{1}{2}\E\left[|Y^{(1)}_1|^q\right]\E[U^{q/\alpha}_t],
		\end{align*}
		where the last equality follows from self-similarity of $Y$. Therefore, for any $t\ge 0$ and $0\le q<p\wedge\alpha$,
		\begin{align*}
		 \frac{1}{2}\E_0\left[|Y^{(1)}_1|^q\right]\frac{\E\left[U^{q/\alpha}_t\right]}{\E[U^{1/\alpha}_t]} \le 	\frac{\E_0\left[\left(\overline{X}^{(1)}_t\right)^q\right]}{\E_0\left[\overline{X}^{(1)}_t\right]}\le \E_0\left[\left(\overline{Y}^{(1)}_1\right)^q\right] \frac{\E\left[U^{q/\alpha}_t\right]}{\E[U^{1/\alpha}_t]}
		\end{align*}
		In light of Proposition~\ref{corr1}, it suffices to show that $\E[U^{q/\alpha}_t]/\E[U^{1/\alpha}_t]\to 0$ whenever $1<q<p\wedge \alpha$, where $p$ is given by \eqref{eq:U_conditions}. By H\"older inequality, for any $1<q<p$ we have
		\begin{align*}
			\frac{\E\left[U^{q/\alpha}_t\right]}{\E[U^{1/\alpha}_t]}\le \left(\frac{\E\left[U^{p/\alpha}_t\right]}{\E[U^{1/\alpha}_t]}\right)^{\frac{q-1}{p-1}},
		\end{align*}
		which shows that the conditions of Proposition~\ref{corr1} hold due to \eqref{eq:U_conditions}. This concludes the proof of \eqref{eq:U_limit}.
		
		 When $U$ has continuous paths, one has
		\begin{align*}
			\sup_{0\le s\le t} Y^{(1)}_{U_s}=\sup_{0\le s\le U_t} Y^{(1)}_{s} \quad \mbox{$\P_0$--almost surely}.
		\end{align*}
		By independence of $Y$ and $U$, and the self-similarity of $Y$, we conclude that $m(t)=\E[\overline{Y}^{(1)}_1]\E[U^{1/\alpha}_t]$.
		\end{proof}
		\subsubsection{Time-change by inverse subordinator} Consider a subordinator (increasing L\'evy process on $(0,\infty)$, see Chapter III in Bertoin \cite{BertoinBook}) $S=(S_t)_{t\ge 0}$ with $S_0=0$ such that 
		\begin{align*}
			\E[e^{-\lambda S_t}]=e^{-t\varphi(\lambda)} \quad \mbox{for all $\lambda, t>0$},
		\end{align*}
		where $\varphi$ is a Bernstein function such that 
		\begin{align}\label{eq:Bernstein}
			\varphi(\lambda)=b\lambda+\int_0^\infty(1-e^{-\lambda r})\mu(dr)
		\end{align}
		with $b\ge 0$, and $\int_{0}^\infty\min\{1,r\}\mu(dr)<\infty$.
		 The inverse subordinator $E=(E_t)_{t\ge 0}$ is defined as the right inverse of $(S_t)_{t\ge 0}$, that is, 
		 \begin{align*}
		 	E_t=\inf\{s\ge 0: S_s>t\}.
		 \end{align*}
		 Then, $E$ is also an increasing process, and it is continuous in time if and only if $S$ has strictly increasing sample paths almost surely, which occurs if and only if $\varphi(\infty)=\infty$.
		 \begin{proposition}
		 	Assume that $\varphi$ is regularly varying at infinity with index $\beta\in (0,1)$ and $X_t=Y_{E_t}$, where $Y$ is a $d$-dimensional isotropic $\alpha$-stable L\'evy process with $\alpha\in (1,2]$ and independent of $S$. Then,
		 	\begin{align*}
		 		\lim_{t\downarrow 0}\varphi(1/t)^{\frac 1\alpha}(\Vol(\Omega)-Q^X_\Omega(t))=\frac{\Gamma(1+1/\alpha)}{\Gamma(1+\beta/\alpha)}\E_0\left[\overline{Y}^{(1)}_1\right]\Per(\Omega).
		 	\end{align*}
		\end{proposition}
		\begin{remark} The above proposition has been proved in \cite[Theorem~4.4]{KobayashiPark2023} for $\alpha=2$, and in \cite{KobayashiPark2022} for $\alpha\in (1,2)$. Below we provide an alternative argument using Proposition~\ref{prop:U_limits}.
		\end{remark}
		\begin{remark}
			When $\varphi(\lambda)=\lambda^\beta$, $\beta\in (0,1)$ and $\alpha=2$, $Q^{X}_\Omega(t)=\int_\Omega u_\beta(x,t)dx$, where $u_\beta$ is the unique solution to the time-fractional Cauchy problem with Dirichlet boundary condition on $\Omega$, see \cite{MeerschaertNaneVellaisamy2009} for details.
		\end{remark}
		\begin{proof} When $\varphi$ is regularly varying at $\infty$, $\varphi(\infty)=\infty$. Therefore from the discussion above, $E$ has continuous sample paths.
			 By \cite[Proposition~4.2]{KobayashiPark2023}, for any $q>0$ one has
			\begin{align}\label{eq:E_t_estimate}
				\E\left[E^q_t\right]\sim \frac{\Gamma(q+1)}{\Gamma(q\beta+1)}\varphi(1/t)^{-q} \quad \mbox{as $t\downarrow 0$}.
			\end{align}
			Hence, for any $p>1$, 
			\begin{align*}
				\frac{\E[E^{p/\alpha}_t]}{\E[E^{1/\alpha}_t]}\sim \frac{\Gamma(p/\alpha+1)\Gamma(\frac{\beta}{\alpha}+1)}{\Gamma(1+1/\alpha)\Gamma(p\beta/\alpha+1)}\frac{\varphi(1/t)^{1/\alpha}}{\varphi(1/t)^{p/\alpha}} \quad \mbox{as $t\downarrow 0$},
			\end{align*}
			which shows that $\lim_{t\downarrow 0}\E[E^{p/\alpha}_t]/\E[E^{1/\alpha}_t]= 0$, since $\varphi$ is regularly varying at $\infty$. Therefore using Proposition~\ref{prop:U_limits} and \eqref{eq:E_t_estimate} for $q=1/\alpha$, we conclude the proof.
		\end{proof}
		\subsubsection{Time-change by any positive, increasing self-similar process}\label{sss:self_similar} Let $U=(U_t)_{t\ge 0}$ be any positive, $\beta$--self-similar, increasing process with c\`adl\`ag paths such that $\E[U^{p/\alpha}_1]<\infty$ for some $p>1$, and $Y$ is same as before. Then, $X_t=Y_{U_t}$ is $\beta/\alpha$--self-similar, that is,
		\begin{align*}
			(cX_t)_{t\ge 0}\overset{d}{=} (X_{tc^{\frac{\alpha}{\beta}}})_{t\ge 0} \quad \mbox{for all $c>0$}.
		\end{align*}
		By self-similarity, we also have 
		\begin{align*}
			\E[U^{q/\alpha}_t]&=t^{q\beta/\alpha}\E[U^{q/\alpha}_1] \quad \mbox{for $q\in [0,p]$}, \\
			 \E\left[\overline{X}^{(1)}_t\right]&=t^{\frac{\beta}{\alpha}}\E\left[\overline{X}^{(1)}_1\right].
		\end{align*}
		Therefore, by Proposition~\ref{prop:U_limits} we deduce the following result.
		\begin{proposition}\label{prop:time_change_SS}
			For any bounded domain $\Omega$ with $C^{1,1}$ boundary,
			\begin{align*}
				\lim_{t\downarrow 0}\frac{\Vol(\Omega)-Q^X_\Omega(t)}{t^{\frac{\beta}{\alpha}}}=\E\left[\overline{X}^{(1)}_1\right]\Per(\Omega).
			\end{align*}
		\end{proposition}
		
		We now provide a specific example of a self-similar increasing process constructed from an increasing self-similar Markov process. Let $\xi=(\xi_t)_{t\ge 0}$ be a positive, increasing, self-similar Markov process with self-similarity index $1/\beta$, that is, for all $c>0$,
		\begin{align*}
			\left(c\xi_{tc^{-\beta}}, \P_x\right)\overset{d}{=}\left(\xi_t, \P_{cx}\right)
		\end{align*}
		 By Lamperti's theorem, see \cite{Lamperti1972}, under the probability law $\P_x$ for any $x>0$, one can write $\xi$ as a time-changed subordinator as follows:
		\begin{align*}
			\xi_t=x\exp\left(S_{A_{tx^{-\beta}}}\right),
		\end{align*}
		where $S$ is a subordinator and $A_t=\inf\{s\ge 0: \int_0^s \exp(\beta S_r)dr>t\}$. Moreover, the above representation gives a bijection between the class of all positive, increasing, self-similar Markov processes and subordinators. Let $\varphi$ denote the Bernstein function associated with $S$ defined as in \eqref{eq:Bernstein}.
		For simplicity, we exclude the case when $S$ is arithmetic, that is, $b=0$ and $\mu$ is supported on a lattice $r\mathbb{Z}$ for some $r>0$. 
		 When $\varphi'(0+)<\infty$, or equivalently, $\E[S_1]<\infty$, Bertoin and Caballero \cite{BertoinCaballero2002} proved that the process $(\xi_t,\P_x)$ converges weakly as $x\downarrow 0$, and therefore, one gets an increasing, self-similar process $\xi$ with $\xi_0=0$. Let $\zeta=(\zeta_t)_{t\ge 0}$ denote the right inverse of $\xi$, that is,
		\begin{align*}
			\zeta_t=\inf\{s>0: \xi_s>t\}.
		\end{align*}
		Then, $\zeta$ is $\beta$-self-similar, positive, and increasing process but may not be Markovian. From \cite[Lemma~2.1]{LoeffenPatieSavov2019} it is known that $\E[\zeta^p_1]<\infty$ for all $p>0$. For a $d$-dimensional $\alpha$-stable isotropic L\'evy process $Y=(Y_t)_{t\ge 0}$ independent of $\xi$, let us define the time-changed process $X_t=Y_{\zeta_t}$. Then, Proposition~\ref{prop:time_change_SS} holds for $X$. We note that $\zeta$ has discontinuous paths whenever $\varphi(\infty)<\infty$.
		
	 Since $\varphi'(0+)<\infty$, using Fubini theorem and a simple change of variable leads to
		\begin{align*}
			\varphi(\lambda)=b\lambda+\lambda\int_0^1 r^{\lambda-1}m(r)dr,
		\end{align*}
	where $m$ is a non-decreasing function on $(0,1)$ and $\int_0^1 (-\log r\wedge 1) m(r)dr<\infty$. Patie and Srapionyan \cite[Definition~3.1]{PatieSrapionian2021} introduced the following non-local operator operator:
	\begin{align*}
		\partial^\varphi_t f= t^{1-\beta} b f'(t)+ t^{-\beta}\int_0^t f'(r)m(r/t) dr.
	\end{align*}
	In particular, when $\xi$ is the standard $\beta$-stable subordinator with $\beta\in (0,1)$, one has 
	\begin{align*}
		b=0 \quad  \text{and} \quad m(r)=\frac{(1-r)^{-\beta}}{\Gamma(1-\beta)}\mathbbm{1}_{\{0<r<1\}},
	\end{align*}
	 see e.g. \cite[Example~5]{BertoinYor2002}, and in this case $\partial^\varphi_t$ coincides with the Caputo fractional derivative of order $\beta$. Let us assume that $\zeta$ has continuous paths, or equivalently, $\varphi(\infty)=\infty$. Consider the Cauchy problem
	\begin{equation}\label{eq:integro_diff}
	\begin{aligned}
		&\partial^\varphi_t u = -(-\Delta_\Omega)^{\alpha/2} u \\
		&u(x,0)=\mathbbm{1}_\Omega(x),
	\end{aligned}
	\end{equation}
	where $\Delta_\Omega$ is the Dirichlet Laplacian on $\Omega$ and $\alpha\in (1,2]$. Since $(-\Delta_\Omega)^{\alpha/2}$ is self-adjoint and compact in $L^2(\Omega)$, by \cite[Theorem~4.1]{PatieSrapionian2021}, $u_\varphi(x,t)=\P_x\left(\tau^Y_\Omega>\zeta_t\right)$ is a solution to the above integro-partial differential equation. Due to path continuity of $\zeta$, we get $u_\varphi(x,t)=\P_x\left(\tau^X_\Omega>t\right)$ for all $x\in\Omega$ and $t\ge 0$, which shows that the SHC of $X$ is related to the solution to \eqref{eq:integro_diff}, that is,
	\begin{align*}
		Q^X_\Omega(t)=\int_\Omega u_\varphi(x,t)dx.
	\end{align*}
	In particular, when $\alpha=2$, $u_\varphi$ solves the following Cauchy problem with Dirichlet boundary conditions:
	\begin{align*}
		\partial^\varphi_tu(x,t)&=\Delta_x u(x,t) \quad \mbox{in $\Omega\times (0,\infty)$} \\
		u(x,t)&= 0 \quad \mbox{for $(x,t)\in \partial\Omega\times (0,\infty) $} \\
		u(x,0) &= 1 \quad \mbox{for $x\in\Omega$}.
	\end{align*}
	\section{Preliminaries: Perimeter and regularity of the boundary}\label{sec:3} A function $f\in L^1(\R^d)$ has bounded variation if
	\begin{align*}
		\|D f\|:=\sup\left\{\int_{\R^d} f\sum_{i=1}^d D_i\varphi_i: \varphi_i\in C^\infty_c(\R^d), \sum_{i=1}^d \varphi^2_i\le 1\right\}<\infty.
	\end{align*}
	In this case, $\|Df\|$ is called the variation of $f$, and the space of all functions with bounded variation is denoted by $\BV(\R^d)$. A bounded open set $\Omega\subset\R^d$ is said to have \emph{finite perimeter} if $\mathbbm{1}_\Omega\in \BV(\R^d)$, and we write
	\begin{align*}
		\Per(\Omega)=\|D\mathbbm{1}_\Omega\|.
	\end{align*}
	Next, for $\kappa>0$, a function $f:\R^d\to\R$ is $C^{1,\kappa}$ if $f$ is $C^1$ and $\nabla f$ is uniformly H\"older continuous with exponent $\kappa$, that is, there exists $L>0$ such that 
	\begin{align*}
		|\nabla f(x)-\nabla f(y)|\le L|x-y|^\kappa
	\end{align*}
	for all $x,y\in\R^d$. The domain $\Omega\subset\R^d$ has  $C^{1,\kappa}$ boundary if for every $x\in\partial\Omega$, there exists a neighborhood $V_x$ and a $C^{1,\kappa}$ function $\gamma:\R^{d-1}\to\R$ such that 
	\begin{align*}
		\Omega\cap V_x=\{(y_1,\ldots, y_d): y_d>\gamma(y_1,\ldots, y_{d-1})\}\cap V_x.
	\end{align*}
	If $\Omega$ has $C^{1}$ boundary, from Stoke's theorem it follows that $\Per(\Omega)=\mathcal{H}^{d-1}(\partial\Omega)$, the $(d-1)$-dimensional Hausdorff measure of $\partial\Omega$.
	
	\section{The minimum functional}\label{sec:4}
	Consider a nonnegative measurable function $f$ on $\R^d$. For any $u\ge 0$, let us define
	\begin{align*}
		E^f_u = \{x: f(x)>u\}.
	\end{align*}
	We consider the following functional version of the spectral heat content: for any $f$ as above, we define
	\begin{align*}
		Q^X_f(t)=\int_0^\infty Q^X_{E^f_u}(t) du = \int_0^\infty \int_{\R^d} \P_x(\tau^X_{E^f_u}>t) dx du.
	\end{align*}
	\begin{lemma}
		For any nonnegative measurable function $f$ on $\R^d$ one has
		\begin{align*}
			Q^X_f(t)=\int_{\R^d}\E_x\left[\inf_{0\le s\le t} f(X_s)\right]dx.
		\end{align*}
		In particular, $Q^X_{\mathbbm{1}_\Omega}(t)=Q^X_\Omega(t)$ for any open set $\Omega$.
	\end{lemma}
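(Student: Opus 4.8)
The plan is to derive the identity from the layer-cake (Cavalieri) representation of $\inf_{0\le s\le t}f(X_s)$, an application of Tonelli's theorem, and a pathwise identification of exit times. First I would use that $f\ge 0$ to write, for every sample path,
\[
\inf_{0\le s\le t} f(X_s)=\int_0^\infty \mathbbm{1}\Big(\inf_{0\le s\le t} f(X_s)>u\Big)\,du=\int_0^\infty \mathbbm{1}\big(f(X_s)>u\ \text{for all } s\in[0,t]\big)\,du ,
\]
the last equality holding because the two integrands disagree only at the single value $u=\inf_{0\le s\le t}f(X_s)$. Taking $\E_x$ and applying Tonelli's theorem (the integrand is nonnegative), this gives $\E_x\big[\inf_{0\le s\le t}f(X_s)\big]=\int_0^\infty\P_x\big(f(X_s)>u\ \forall\, s\in[0,t]\big)\,du$. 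Integrating over $x\in\R^d$ and using Tonelli once more to reorder the integrations in the definition of $Q^X_f$, the whole statement reduces to the equality $\P_x\big(f(X_s)>u\ \forall\, s\in[0,t]\big)=\P_x\big(\tau^X_{E^f_u}>t\big)$.

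The heart of the matter is then the pathwise identity $\{\tau^X_{E^f_u}>t\}=\{X_s\in E^f_u\ \text{for all } s\in[0,t]\}$. The inclusion ``$\subseteq$'' is immediate from $\tau^X_{E^f_u}=\inf\{s\ge 0:X_s\notin E^f_u\}$: if this infimum exceeds $t$, then no time $\le t$ belongs to $\{s:X_s\notin E^f_u\}$. For ``$\supseteq$'' I would invoke the c\`adl\`ag property of $X$: if $X_s\in E^f_u$ for all $s\le t$, then in particular $X_t\in E^f_u$, and --- using that $E^f_u$ may be taken open (it is genuinely open for the functions $f$ needed later, namely indicators of open sets and smooth compactly supported functions; in general one replaces $E^f_u$ by its interior, which alters the events only on a measure-zero set of pairs $(x,u)$) --- right continuity at $t$ produces $\delta>0$ with $X_s\in E^f_u$ for all $s\in[t,t+\delta)$, hence $\{s:X_s\notin E^f_u\}\subseteq[t+\delta,\infty)$ and $\tau^X_{E^f_u}\ge t+\delta>t$. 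This is the step I expect to demand the most care: it is precisely where the c\`adl\`ag hypothesis on $X$ enters, and where one must treat the superlevel sets as open sets; everything else is the layer-cake formula and two uses of Tonelli's theorem.

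Chaining the reductions then yields
\[
\int_{\R^d}\E_x\Big[\inf_{0\le s\le t}f(X_s)\Big]dx=\int_{\R^d}\int_0^\infty\P_x\big(\tau^X_{E^f_u}>t\big)\,du\,dx=\int_0^\infty Q^X_{E^f_u}(t)\,du=Q^X_f(t) ,
\]
where in the middle equality I use that $\P_x(\tau^X_{E^f_u}>t)=0$ for $x\notin E^f_u$. Finally, for the ``in particular'' claim I would specialize to $f=\mathbbm{1}_\Omega$ with $\Omega$ bounded and open: then $E^f_u=\Omega$ for $u\in[0,1)$ and $E^f_u=\emptyset$ for $u\ge 1$, so $Q^X_f(t)=\int_0^1 Q^X_\Omega(t)\,du=Q^X_\Omega(t)$; equivalently $\inf_{0\le s\le t}\mathbbm{1}_\Omega(X_s)=\mathbbm{1}\{\tau^X_\Omega>t\}$, and integrating in $x$ returns $\int_\Omega\P_x(\tau^X_\Omega>t)\,dx$.
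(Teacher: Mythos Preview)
Your argument is essentially the paper's: layer-cake plus Tonelli plus a pathwise identification of $\{\tau_{E^f_u}>t\}$ in terms of $\inf_{0\le s\le t}f(X_s)$. The only difference is packaging: the paper records the sandwich $\{u<\inf_s f(X_s)\}\subseteq\{\tau_{E^f_u}>t\}\subseteq\{u\le\inf_s f(X_s)\}$ and uses that both bounds have the same $u$-integral $\E_x[\inf_s f(X_s)]$, whereas you aim for the exact event equality $\{\tau_{E^f_u}>t\}=\{f(X_s)>u\ \forall s\in[0,t]\}$ and (rightly) flag that this step uses right-continuity of $X$ together with openness of $E^f_u$. Your caveat is well taken --- for a merely measurable $f$ the superlevel sets need not be open and the exact equality (and, in fact, the paper's left inclusion) can fail pathwise --- but this is immaterial here, since the lemma is only ever applied to $f\in C^{1,\kappa}_c$ and to $f=\mathbbm{1}_\Omega$ with $\Omega$ open, for which every $E^f_u$ is open.
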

	\begin{proof}
		We note that for any fixed $u\ge 0$, 
		\begin{align*}
			\left\{u< \inf_{0\le s\le t} f(X_s)\right\}\subseteq	\{\tau_{E^f_u}>t\} = \left\{\inf_{s\ge 0}\{s:f(X_s)\le u\}>t\right\} \subseteq\left\{u\le \inf_{0\le s\le t} f(X_s)\right\}
		\end{align*}
		Applying Fubini's theorem, 
		\begin{align*}
			Q^X_f(t)=\int_{\R^d} \int_0^\infty \P_x\left(\inf_{0\le s\le t} f(X_s)\ge u\right) du dx=\int_{\R^d}\mathbb{E}_x\left[\inf_{0\le s\le t} f(X_s)\right] dx.
		\end{align*}
		This completes the proof of the lemma.
	\end{proof}
	In what follows, we will study the asymptotic behavior of the quantity 
	\begin{align*}
		\mathcal{R}^X_f(t):=\int_{\R^d} f(x) dx - Q^X_f(t).
	\end{align*}
	as $t\downarrow 0$. We note that $\mathcal{R}^X_{\mathbbm{1}_\Omega}(t)=\Vol(\Omega)-Q^X_\Omega(t)$.
	\begin{theorem}\label{thm:symmetric_smooth}
		Let $X=(X_t)_{t\ge 0}$ be a translation invariant, isotropic stochastic process on $\R^d$ satisfying Assumption~\ref{assump1}. Then for any nonnegative $f\in C^{1,\kappa}_c(\R^d)$,
		\begin{align*}
			\lim_{t\downarrow 0} \frac{\mathcal{R}^X_f(t)}{\mu(t)} = \int_{\R^d} |\nabla f(x)| dx,
		\end{align*}
		where $\mu(t)$ is defined in \eqref{eq:mu_sigma}. In particular, the above limit holds for all compactly supported smooth functions.
	\end{theorem}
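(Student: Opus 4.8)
The plan is to reduce the problem to a pointwise/uniform analysis of the quantity
$\E_x\big[\inf_{0\le s\le t} f(X_s)\big]$ near the level sets of $f$, and then integrate. Fix a nonnegative $f\in C^{1,\kappa}_c(\R^d)$. First I would write
\begin{align*}
	\mathcal{R}^X_f(t)=\int_{\R^d}\Big(f(x)-\E_x\big[\inf_{0\le s\le t} f(X_s)\big]\Big)dx
	=\int_{\R^d}\E_x\big[f(x)-\inf_{0\le s\le t} f(X_s)\big]dx.
\end{align*}
By translation invariance, $\E_x[\,\cdot\,]=\E_0[\,\cdot\,]$ with $X_s$ replaced by $x+X_s$, so the integrand is $g_t(x):=\E_0\big[f(x)-\inf_{0\le s\le t} f(x+X_s)\big]$, a nonnegative quantity since $f(x)=f(x+X_0)\ge \inf_{0\le s\le t}f(x+X_s)$. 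The heuristic is that on the scale $\mu(t)$, the process $X$ travels a distance of order $\mu(t)$, so $f(x+X_s)$ drops below $f(x)$ by roughly $|\nabla f(x)|$ times the maximal displacement in the direction of $-\nabla f(x)$; by isotropy this maximal displacement has expectation $\mu(t)$ (up to the truncation at $1$, which is harmless for small $t$ since $f$ is bounded). This suggests $g_t(x)\approx |\nabla f(x)|\,\mu(t)$, and integrating gives the claim.

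To make this rigorous I would proceed in the following steps. (1) \emph{Upper bound.} Using that $f$ is $C^{1,\kappa}$ with bounded gradient and compact support, Taylor expand: $f(x)-f(x+X_s)\le -\nabla f(x)\cdot X_s + L|X_s|^{1+\kappa}$ uniformly. Hence $f(x)-\inf_{0\le s\le t}f(x+X_s)\le \sup_{0\le s\le t}\big(-\nabla f(x)\cdot X_s\big)+L\sup_{0\le s\le t}|X_s|^{1+\kappa}$, but one must be careful since the $\inf$ and the error are attained at possibly different times; a cleaner route is to bound $f(x)-\inf_s f(x+X_s)\le \sup_s\big[-\nabla f(x)\cdot X_s + L|X_s|^{1+\kappa}\big]\le \sup_s\big(-\nabla f(x)\cdot X_s\big)+L\sup_s|X_s|^{1+\kappa}$. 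Taking $\E_0$, isotropy gives $\E_0[\sup_s(-\nabla f(x)\cdot X_s)]=|\nabla f(x)|\,\E_0[\sup_s X^{(1)}_s]$; but we want the truncated version $\mu(t)$. Here I would split according to whether $\sup_s|X_s|$ exceeds some $\varepsilon$: on $\{\sup_s|X_s|>\varepsilon\}$ use the crude bound $|f(x)-\inf_s f(x+X_s)|\le 2\|f\|_\infty$ together with Assumption~\ref{assump1} (via a union bound over coordinates and rotations reducing $\P_0(\sup_s|X_s|>\varepsilon)$ to $\P_0(\sup_s X^{(1)}_s>\varepsilon/\sqrt d)$, say) to show this contributes $o(\mu(t))$ after integrating over the compact support; on $\{\sup_s|X_s|\le\varepsilon\}$ the truncation at $1$ is inactive and $|X_s|^{1+\kappa}\le \varepsilon^{\kappa}|X_s|$, giving an error $\le L\varepsilon^\kappa\E_0[\sup_s|X_s|\wedge\varepsilon]\lesssim L\varepsilon^\kappa\,\mu(t)$ (again controlling $\E_0[\sup|X_s|\wedge 1]$ by $\mu(t)$ up to a dimensional constant via isotropy). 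Integrating and letting $t\to0$ then $\varepsilon\to0$ yields $\limsup \mathcal{R}^X_f(t)/\mu(t)\le \int|\nabla f|$. (2) \emph{Lower bound.} For the reverse, fix $x$ with $\nabla f(x)\ne 0$ and a unit vector $v=-\nabla f(x)/|\nabla f(x)|$; then $f(x)-\inf_s f(x+X_s)\ge f(x)-f(x+X_{s^*})$ for the time $s^*$ maximizing $v\cdot X_s$ on $[0,t]$, and Taylor gives this is $\ge |\nabla f(x)|(v\cdot X_{s^*})-L|X_{s^*}|^{1+\kappa}\wedge(\text{const})$; one then restricts to the event $\{\sup_s|X_s|\le\varepsilon\}$ and argues, via the same Assumption~\ref{assump1} splitting, that $\E_0[(v\cdot X_{s^*})\mathbbm 1_{\sup|X|\le\varepsilon}]=\mu(t)(1+o(1))$ up to the $\varepsilon^\kappa$-error, uniformly in $x$ on compacts. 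Integrating over $\{|\nabla f|\ne 0\}$ (the set $\{\nabla f=0\}$ contributes $0$ to $\int|\nabla f|$ and a nonnegative, $o(\mu(t))$-after-scaling amount to $\mathcal R$ by the upper-bound analysis) gives $\liminf \mathcal{R}^X_f(t)/\mu(t)\ge \int|\nabla f|$.

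The \textbf{main obstacle} is handling the discrepancy between the time at which the infimum of $f(x+X_s)$ is attained and the time at which the linear functional $-\nabla f(x)\cdot X_s$ is maximized: the naive bound $f(x)-\inf_s f(x+X_s)\approx \sup_s(-\nabla f(x)\cdot X_s)$ is only valid up to the curvature error $L|X_s|^{1+\kappa}$, and one needs this error to be genuinely $o(\mu(t))$ after integration, which is exactly what the $C^{1,\kappa}$ regularity buys (so that the error is $\varepsilon^\kappa$ times the linear term) combined with Assumption~\ref{assump1} (so that the rare excursions beyond $\varepsilon$ are negligible relative to $\mu(t)$, not just relative to $t$). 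A secondary technical point is passing from $\E_0[\sup_s X^{(1)}_s\wedge 1]=\mu(t)$ to control of $\E_0[\sup_s(v\cdot X_s)\wedge\varepsilon]$ for arbitrary unit $v$: isotropy makes $v\cdot X_s$ equal in law to $X^{(1)}_s$, and the truncation levels $1$ versus $\varepsilon$ differ only on the event $\{\sup_s X^{(1)}_s>\varepsilon\}$, which Assumption~\ref{assump1} controls; so $\E_0[\sup_s(v\cdot X_s)\wedge\varepsilon]=\mu(t)+o(\mu(t))$. Dominated convergence for the outer integral over $x$ is justified by the uniform bound $g_t(x)\le C_f\,\mu(t)\,\mathbbm 1_{\supp f \oplus B_1}(x)$ coming from step (1).
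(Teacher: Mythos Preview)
Your proposal is correct and follows essentially the same approach as the paper: Taylor-expand $f$ to isolate the linear term $\sup_s(-\nabla f(x)\cdot X_s)$, use isotropy to identify its expectation (on the truncated event) with $|\nabla f(x)|\,\mu(t)$ up to $o(\mu(t))$, and control the $|X_s|^{1+\kappa}$ remainder by splitting on $\{\sup_s|X_s|\lessgtr\varepsilon\}$ together with Assumption~\ref{assump1}. The only cosmetic difference is that the paper packages the two-sided linear/remainder comparison as a single inequality (Lemma~\ref{lem:taylor_bound}) and organizes the lower bound via $\mu(t,\delta(x))$ with $\delta(x)=\dist(x,\partial\supp f)$ and Fatou's lemma, rather than your uniform $\varepsilon$-cut, but the underlying argument is the same.
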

	We require the following lemmas to prove the above theorem.
	\begin{lemma}\label{lem:limits}
		For any $\varepsilon>0$,
		\begin{align}
			&\lim_{t\downarrow 0}\frac{1}{\mu(t)}\E_0\left[\overline{X}^{(1)}_t\wedge \varepsilon\right]=1 \label{eq:lim1}\\
			&\lim_{t\downarrow 0}\frac{1}{\mu(t)}\E_0\left[\overline{X}^{(1)}_t\mathbbm{1}\left\{\overline{X}^{(1)}_t\le \varepsilon\right\}\right]=1 \label{eq:lim0}\\
			&\lim_{t\downarrow 0}\frac{1}{\mu(t)}\E_0\left[\overline{X}^{(1)}_t\mathbbm{1}\left\{\sup_{0\le s\le t}|X_s|\le \varepsilon\right\}\right]=1 \label{eq:lim2} \\
			&\limsup_{t\downarrow 0}\frac{1}{\mu(t)}\E_0\left[\sup_{0\le s\le t} |X_s|\mathbbm{1}\left\{\sup_{0\le s\le t}|X_s|\le \varepsilon\right\}\right]\le 2d \label{eq:lim3}
		\end{align} 
	\end{lemma}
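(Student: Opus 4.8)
The plan is to derive the four limits, in the order listed, from Assumption~\ref{assump1} by elementary truncation estimates, each display building on the previous one. Throughout set $M_t=\sup_{0\le s\le t}X^{(1)}_s$ and $N_t=\sup_{0\le s\le t}|X_s|$, so that $\mu(t)=\E_0[M_t\wedge 1]$, $0\le M_t\le N_t$ (using $X^{(1)}_0=0$ under $\P_0$), and $\mu(t)\in(0,1]$ for small $t>0$ (positivity being implicit in the statement of Assumption~\ref{assump1}). The single mechanism behind all four limits is that Assumption~\ref{assump1} asserts $\P_0(M_t>\varepsilon)=o(\mu(t))$ for every $\varepsilon>0$, so any two bounded truncations of $M_t$ (or of $N_t$) that differ only on such a tail event have expectations differing by $o(\mu(t))$.

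For \eqref{eq:lim1} I will use the pointwise bound $|M_t\wedge\varepsilon-M_t\wedge 1|\le|\varepsilon-1|\,\mathbbm{1}\{M_t>\varepsilon\wedge 1\}$, which is elementary to check by considering the cases $M_t\le\varepsilon\wedge 1$ and $M_t>\varepsilon\wedge 1$. Taking $\E_0$ gives $|\E_0[M_t\wedge\varepsilon]-\mu(t)|\le|\varepsilon-1|\,\P_0(M_t>\varepsilon\wedge 1)$, and dividing by $\mu(t)$ and letting $t\to0$ finishes the proof by Assumption~\ref{assump1}. For \eqref{eq:lim0} I will use the exact identity $M_t\wedge\varepsilon=M_t\mathbbm{1}\{M_t\le\varepsilon\}+\varepsilon\mathbbm{1}\{M_t>\varepsilon\}$, which rearranges to $\E_0[M_t\mathbbm{1}\{M_t\le\varepsilon\}]=\E_0[M_t\wedge\varepsilon]-\varepsilon\,\P_0(M_t>\varepsilon)$; dividing by $\mu(t)$, the first term tends to $1$ by \eqref{eq:lim1} and the second to $0$ by Assumption~\ref{assump1}.

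For \eqref{eq:lim2} and \eqref{eq:lim3} I will first upgrade the tail bound to $\P_0(N_t>\varepsilon)=o(\mu(t))$. By the isotropy of $X$ (the map permuting coordinates $1$ and $i$, and the map flipping the sign of a coordinate, are orthogonal), for each $i$ the processes $(X^{(i)}_s)_{s\ge0}$ and $(-X^{(i)}_s)_{s\ge0}$ have the same law as $(X^{(1)}_s)_{s\ge0}$; hence $\sup_{0\le s\le t}X^{(i)}_s$ and $\sup_{0\le s\le t}(-X^{(i)}_s)$ are each equal in law to $M_t$, so $\P_0(\sup_{0\le s\le t}|X^{(i)}_s|>\delta)\le2\,\P_0(M_t>\delta)$. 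Since $\{N_t>\varepsilon\}\subseteq\bigcup_{i=1}^d\{\sup_{0\le s\le t}|X^{(i)}_s|>\varepsilon/\sqrt d\}$, a union bound and Assumption~\ref{assump1} give $\P_0(N_t>\varepsilon)\le2d\,\P_0(M_t>\varepsilon/\sqrt d)=o(\mu(t))$. Now \eqref{eq:lim2} follows by a sandwich: from $\{N_t\le\varepsilon\}\subseteq\{M_t\le\varepsilon\}$ one has $\E_0[M_t\mathbbm{1}\{N_t\le\varepsilon\}]\le\E_0[M_t\mathbbm{1}\{M_t\le\varepsilon\}]$, while the difference equals $\E_0[M_t\mathbbm{1}\{M_t\le\varepsilon<N_t\}]\le\varepsilon\,\P_0(N_t>\varepsilon)=o(\mu(t))$, and \eqref{eq:lim0} pins the common limit at $1$. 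For \eqref{eq:lim3}, from $|X_s|\le\sum_{i=1}^d|X^{(i)}_s|$ I get $N_t\le\sum_{i=1}^d\sup_{0\le s\le t}|X^{(i)}_s|$; on $\{N_t\le\varepsilon\}$ each term satisfies $\sup_{0\le s\le t}|X^{(i)}_s|\le\sup_{0\le s\le t}X^{(i)}_s+\sup_{0\le s\le t}(-X^{(i)}_s)$ with both summands $\le\varepsilon$, so after restricting each indicator to the corresponding one-sided event and using the distributional identities above one obtains $\E_0[N_t\mathbbm{1}\{N_t\le\varepsilon\}]\le2d\,\E_0[M_t\mathbbm{1}\{M_t\le\varepsilon\}]$; dividing by $\mu(t)$ and applying \eqref{eq:lim0} gives the bound $2d$.

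There is no deep obstacle; the whole argument is bookkeeping with truncations and tail probabilities. The two points that need care are (a) applying Assumption~\ref{assump1} uniformly so that every stray tail term is genuinely $o(\mu(t))$, and (b) using isotropy at the level of the path law, not merely of the one-dimensional marginals, so that $\sup_{0\le s\le t}(\pm X^{(i)}_s)$ really is distributed as $M_t$; this is what lets the $d$-dimensional quantity $N_t$ be controlled by the one-dimensional $M_t$, and it is the source of the constants $\sqrt d$ and $2d$.
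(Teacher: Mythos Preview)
Your proof is correct and follows essentially the same approach as the paper: the same truncation estimates for \eqref{eq:lim1} and \eqref{eq:lim0}, the same isotropy-based bound $\P_0(N_t>\varepsilon)\le 2d\,\P_0(M_t>\varepsilon/\sqrt d)$ for \eqref{eq:lim2}, and the same reduction of $N_t$ to coordinate suprema for \eqref{eq:lim3}. The only cosmetic difference is in \eqref{eq:lim3}, where the paper bounds $\E_0[\sup_s|X^{(i)}_s|\mathbbm{1}\{\sup_s|X^{(i)}_s|\le\varepsilon\}]$ via the layer-cake integral $\int_0^\varepsilon\P_0(\sup_s|X^{(i)}_s|>u)\,du\le 2\,\E_0[M_t\wedge\varepsilon]$ and then invokes \eqref{eq:lim1}, whereas you use the pointwise inequality $\sup_s|X^{(i)}_s|\le\sup_s X^{(i)}_s+\sup_s(-X^{(i)}_s)$ and invoke \eqref{eq:lim0}; both routes yield the same constant $2d$.
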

	\begin{proof}
		For any $\varepsilon>0$ we note that
		\begin{align*}
			\left|\E_0\left[\overline{X}^{(1)}_t\wedge \varepsilon\right]-\mu(t)\right|\le \max\{\varepsilon,1\}\P_0\left(\overline{X}^{(1)}_t>\varepsilon\wedge 1\right).
		\end{align*}
		Therefore, \eqref{eq:lim1} follows as $\lim_{t\downarrow 0}\P_0\left(\overline{X}^{(1)}_t>\varepsilon\wedge 1\right)/\mu(t)=0$. Next, for any $t>0$, we have
		\begin{align*}
			0&\le \E_0\left[\overline{X}^{(1)}_t\wedge\varepsilon\right]-\E_0\left[\overline{X}^{(1)}_t\mathbbm{1}\left\{\overline{X}^{(1)}_t\le \varepsilon\right\}\right] \\
			&\le \varepsilon\P_0\left(\overline{X}^{(1)}_t>\varepsilon\right),
		\end{align*}
		and hence, \eqref{eq:lim0} follows from \eqref{eq:lim1} and Assumption~\ref{assump1}.
		To prove \eqref{eq:lim2}, we observe that
		\begin{align*}
			0&\le\E_0\left[\overline{X}^{(1)}_t\mathbbm{1}\{\overline{X}^{(1)}_t\le \varepsilon\}\right]-\E_0\left[\overline{X}^{(1)}_t\mathbbm{1}\{\sup_{0\le s\le t}|X_s|\le \varepsilon\}\right] \\
			&= \E_0\left[\overline{X}^{(1)}_t\mathbbm{1}\{\overline{X}^{(1)}_t\le \varepsilon, \sup_{0\le s\le t} |X_s|>\varepsilon\}\right] \\
			& \le \varepsilon \P_0\left(\sup_{0\le s\le t} |X_s|>\varepsilon\right)\le 2d\epsilon\P_0\left(\overline{X}^{(1)}_s>\epsilon/\sqrt{d}\right),
		\end{align*}
		where the last inequality follows from the isotropy of $X_t$.
		Again by Assumption~\ref{assump1} and \eqref{eq:lim0}, we conclude that 
		\begin{equation}\label{eq:sup_exp}
			\begin{aligned}
				&\ \ \ \ \lim_{t\downarrow 0}\frac{1}{\mu(t)}\E_0\left[\overline{X}^{(1)}_t\mathbbm{1}\{\sup_{0\le s\le t}|X_s|\le \varepsilon\}\right] \\
				&=\lim_{t\downarrow 0}\frac{1}{\mu(t)}\E_0\left[\overline{X}^{(1)}_t\mathbbm{1}\{\overline{X}^{(1)}_t\le \varepsilon\}\right]=1.
			\end{aligned}
		\end{equation}
		Finally, using the symmetry of the random variables $X^{(i)}_t$ for $i=1,\ldots, d$, we note that 
		\begin{align*}
			&\E_0\left[\sup_{0\le s\le t} |X^{(i)}_s|\mathbbm{1}\{\sup_{0\le s\le t} |X^{(i)}_s|\le \varepsilon\}\right]\le\int_0^\varepsilon\P_0\left(\sup_{0\le s\le t} |X^{(i)}_s|>u\right)du \\
			&\le 2\int_0^\varepsilon\P_0\left(\overline{X}^{(i)}_t>u\right)du = 2\E_0\left[\overline{X}^{(i)}_t\wedge\varepsilon\right].
		\end{align*}
		Since $|X_s|\le \sum_{i=1}^d |X^{(i)}_s|$, it follows that
		\begin{align*}
			\E_0\left[\sup_{0\le s\le t} |X_s|\mathbbm{1}\{\sup_{0\le s\le t}|X_s|\le \varepsilon\}\right]\le \sum_{i=1}^d\E_0\left[\sup_{0\le s\le t} |X^{(i)}_s|\mathbbm{1}\{\sup_{0\le s\le t}|X^{(i)}_s|\le \varepsilon\}\right].
		\end{align*}
		Therefore, \eqref{eq:lim3} follows from \eqref{eq:sup_exp} and \eqref{eq:lim2}.
	\end{proof}
	\begin{lemma}\label{lem:taylor_bound}
		Suppose that $f\in C^{1,\kappa}(\R^d)$ with $|\nabla f(x)-\nabla f(y)|\le L|x-y|^\kappa$ for all $x,y\in\R^d$, and $t>0$. Then,
		\begin{align*}
			\left|\inf_{0\le s\le t} f(X_s)-f(x)-\inf_{0\le s\le t} \langle X_s-x,\nabla f(x)\rangle\right|\le \frac{L}{1+\kappa}\sup_{0\le s\le t} |X_s-x|^{1+\kappa}
		\end{align*}
	\end{lemma}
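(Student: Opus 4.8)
The plan is to reduce the statement to two elementary facts: a pointwise Taylor estimate with Hölder remainder, and the basic observation that the infimum is a $1$-Lipschitz operation with respect to the sup-norm. First I would establish the pointwise bound
\begin{align*}
	|f(y)-f(x)-\langle y-x,\nabla f(x)\rangle|\le \frac{L}{1+\kappa}|y-x|^{1+\kappa}
\end{align*}
for all $x,y\in\R^d$. This follows by writing $f(y)-f(x)=\int_0^1 \langle\nabla f(x+r(y-x)),y-x\rangle\,dr$ via the fundamental theorem of calculus applied to $r\mapsto f(x+r(y-x))$, subtracting $\langle y-x,\nabla f(x)\rangle=\int_0^1\langle\nabla f(x),y-x\rangle\,dr$, and using the Hölder bound $|\nabla f(x+r(y-x))-\nabla f(x)|\le L r^\kappa|y-x|^\kappa$ together with $\int_0^1 r^\kappa\,dr=\tfrac{1}{1+\kappa}$.

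Next I would apply this with $y=X_s$ for each fixed $s\in[0,t]$ along the sample path. Setting $M:=\frac{L}{1+\kappa}\sup_{0\le s\le t}|X_s-x|^{1+\kappa}$, the pointwise estimate gives, for every $s\in[0,t]$,
\begin{align*}
	\Bigl|\,f(X_s)-\bigl(f(x)+\langle X_s-x,\nabla f(x)\rangle\bigr)\,\Bigr|\le \frac{L}{1+\kappa}|X_s-x|^{1+\kappa}\le M.
\end{align*}
In other words the two functions $s\mapsto f(X_s)$ and $s\mapsto f(x)+\langle X_s-x,\nabla f(x)\rangle$ on $[0,t]$ are uniformly within distance $M$ of each other.

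Finally I would invoke the elementary inequality that if $g,h:[0,t]\to\R$ satisfy $|g(s)-h(s)|\le M$ for all $s$, then $|\inf_{0\le s\le t}g(s)-\inf_{0\le s\le t}h(s)|\le M$ (from $g(s)\le h(s)+M$ take infima to get $\inf g\le \inf h+M$, and symmetrically). Applying this with $g(s)=f(X_s)$ and $h(s)=f(x)+\langle X_s-x,\nabla f(x)\rangle$, and noting that $\inf_{0\le s\le t}\bigl(f(x)+\langle X_s-x,\nabla f(x)\rangle\bigr)=f(x)+\inf_{0\le s\le t}\langle X_s-x,\nabla f(x)\rangle$ since $f(x)$ is a constant, yields precisely the claimed bound. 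There is no serious obstacle here; the only point requiring a little care is getting the sharp constant $\tfrac{1}{1+\kappa}$ in the Taylor remainder, which comes out automatically from the integral representation.
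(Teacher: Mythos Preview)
Your proof is correct and follows essentially the same approach as the paper: both establish the pointwise Taylor estimate $|f(X_s)-f(x)-\langle X_s-x,\nabla f(x)\rangle|\le \frac{L}{1+\kappa}|X_s-x|^{1+\kappa}$ via the integral remainder and the H\"older bound on $\nabla f$. In fact your write-up is more complete than the paper's, which stops after the pointwise bound and leaves implicit the step you spell out---that the infimum is $1$-Lipschitz in the sup-norm and that the constant $f(x)$ pulls out of the infimum.
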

	\begin{proof}
		Since $f\in C^{1}(\R^d)$, using Taylor expansion around $x$, we obtain,
		\begin{align}\label{eq:taylor1}
			f(X_s) = f(x) + \langle X_s-x, \nabla f(x)\rangle + \omega_f(x, X_s)
		\end{align}
		where $\omega_f(x,x')=\int_0^1 \langle(x'-x), (\nabla f(x+u(x'-x))-\nabla f(x))\rangle du$.
		Using the $\kappa$-H\"older continuity of $\nabla f$, it follows that
		\begin{align}\label{eq:taylor_bound}
			|\omega_f(x,x')|\le \frac{L}{1+\kappa}|x-x'|^{1+\kappa}.
		\end{align}
		This completes the proof of the lemma.
	\end{proof}
	\begin{proof}[Proof of Theorem~\ref{thm:symmetric_smooth}]
		Let us write $S_f=\supp(f)$. Then we observe that 
		\begin{align*}
			f(x)-\E_x\left[\inf_{0\le s\le t} f(X_s)\right]=0
		\end{align*}
		whenever $x\notin S_f$.
		As a result, using \eqref{eq:taylor1}, $\mathcal{R}^X_f(t)$ reduces to
		\begin{equation}\label{eq:R_f}
			\begin{aligned}
				\mathcal{R}^X_f(t)&= \int_{S_f}\mathbb{E}_x\left[-\inf_{0\le s\le t} \left(\langle X_s-x,\nabla f(x)\rangle + \omega_f(X_s,x)\right)\mathbbm{1}\{X_s\in S_f, 0\le s\le t\}\right] dx \\
			\end{aligned}
		\end{equation}
		Assuming that $S_f\subset B_{R}(0)$ for some $R>0$, \eqref{eq:R_f} implies
		\begin{equation}\label{eq:R_f_bound}
			\begin{aligned}
				&\left|\mathcal{R}^X_f(t)-\int_{S_f}\E_x\left[\sup_{0\le s\le t}-\langle X_s-x,\nabla f(x)\rangle \mathbbm{1}\{X_s\in S_f, 0\le s\le t\} \right]dx\right| \\
				&\le \int_{S_f}\E_x\left[\sup_{0\le s\le t}|\omega_f(X_s, x)|\mathbbm{1}\left\{\sup_{0\le s\le t}|X_s-x|\le 2R\right\}\right]dx \\
				& \le \frac{L}{1+\kappa}\int_{S_f}\E_x\left[\sup_{0\le s\le t}|X_s-x|^{1+\kappa}\mathbbm{1}\left\{\sup_{0\le s\le t}|X_s-x|\le 2R\right\}\right]dx,
			\end{aligned}
		\end{equation}
		where we used the bound in \eqref{eq:taylor_bound}. For $\varepsilon>0$, the translation invariance and isotropy of $X$ leads to 
		\begin{align*}
			& \ \ \ \ \E_x\left[\sup_{0\le s\le t}|X_s-x|^{1+\kappa}\mathbbm{1}\left\{\sup_{0\le s\le t}|X_s-x|\le 2R\right\}\right] \\
			&\le \varepsilon^\kappa \E_0\left[\sup_{0\le s\le t}|X_s|\mathbbm{1}\{\sup_{0\le s\le t}|X_s|\le\varepsilon\}\right]+2^{1+\kappa}R^{1+\kappa}\P_0\left(\sup_{0\le s\le t} |X_s|>\varepsilon\right).
		\end{align*}
		This shows with Assumption~\ref{assump1} and \eqref{eq:lim3} that for any $\varepsilon>0$,
		\begin{align*}
			\limsup_{t\downarrow 0}\frac{1}{\mu(t)}\int_{S_f}\E_x\left[\sup_{0\le s\le t}|X_s-x|^{1+\kappa}\mathbbm{1}\left\{\sup_{0\le s\le t}|X_s-x|\le 2R\right\}\right] dx 
			\le 2d\varepsilon^\kappa \Vol(S_f).
		\end{align*}
		Letting $\varepsilon\to 0$, the above inequality combined with \eqref{eq:R_f_bound} yields
		\begin{align}\label{eq:lim_remainder}
			\lim_{t\downarrow 0}\frac{1}{\mu(t)}\left|\mathcal{R}^X_f(t)-\int_{S_f}\E_x\left[\sup_{0\le s\le t}-\langle X_s-x,\nabla f(x)\rangle \mathbbm{1}\{X_s\in S_f, 0\le s\le t\} \right]dx\right|=0.
		\end{align}
		Let us denote 
		\begin{align*}
			\delta(x)=\inf\{|x-y|: y\notin  S_f\}.
		\end{align*}
		Then, any $x\in S_f$ and $t\ge 0$ we have 
		\begin{equation}\label{eq:E_S_f}
			\begin{aligned}
				&\ \ \ \sup_{0\le s\le t}-\langle X_s-x,\nabla f(x)\rangle \mathbbm{1}\{\sup_{0\le s\le t}|X_s-x|\le \delta(x)\} \\
				& \le \sup_{0\le s\le t}-\langle X_s-x,\nabla f(x)\rangle \mathbbm{1}\{X_s\in S_f, 0\le s\le t\} \\
				&\le \sup_{0\le s\le t}-\langle X_s-x,\nabla f(x)\rangle\mathbbm{1}\{\sup_{0\le s\le t}|X_s-x|\le 2R\}
			\end{aligned}
		\end{equation}
		Recalling the isotropy and translation invariance of $X$, for any $r>0$ we have
		\begin{align*}
			&\ \ \ \ \mathbb{E}_x\left[\sup_{0\le s\le t}- \langle X_s-x,\nabla f(x)\rangle\mathbbm{1}\{\sup_{0\le s\le t}|X_s-x|\le r\}\right] \\
			&= \mathbb{E}_0\left[\overline{X}^{(1)}_t\mathbbm{1}\{\sup_{0\le s\le t}|X_s|\le r\}\right]|\nabla f(x)|=:\mu(t,r)|\nabla f(x)|.
		\end{align*}
		As a result, \eqref{eq:E_S_f} implies the following upper and lower bound:
		\begin{equation}\label{eq:E_f_2}
			\begin{aligned}
				& \mu(t, \delta(x))|\nabla f(x)| \\
				&\le \E_x\left[\sup_{0\le s\le t}-\langle X_s-x,\nabla f(x)\rangle \mathbbm{1}\{X_s, x\in S_f, 0\le s\le t\}\right] \\ 
				& \le \mu(t,2R)|\nabla f(x)|.
			\end{aligned}
		\end{equation}
		Combining \eqref{eq:lim_remainder} and \eqref{eq:E_f_2}, we obtain
		\begin{equation}\label{eq:limsup}
			\begin{aligned}
				\limsup_{t\downarrow 0}\frac{\mathcal{R}^X_f(t)}{\mu(t)}&\le \limsup_{t\downarrow 0}\frac{\mu(t,2R)}{\mu(t)}\int_\Omega|\nabla f(x)|dx \\
				&=\int_{\R^d}|\nabla f(x)|dx,
			\end{aligned}
		\end{equation}
		where the last equality follows from \eqref{eq:lim0} in Lemma~\ref{lem:limits}.
		On the other hand, using \eqref{eq:lim_remainder} and \eqref{eq:E_f_2} once again we get
		\begin{align*}
			\liminf_{t\downarrow 0}\frac{\mathcal{R}^X_f(t)}{\mu(t)}\ge \liminf_{t\downarrow 0}\int_{S_f}|\nabla f(x)|\frac{\mu(t,\delta(x))}{\mu(t)}dx.
		\end{align*}
		Since by \eqref{eq:lim0} in Lemma~\ref{lem:limits}, $\frac{\mu(t, \delta(x))}{\mu(t)}\to 1$ as $t\downarrow 0$, applying Fatou's lemma we conclude that
		\begin{align}\label{eq:liminf}
			\liminf_{t\downarrow 0}\frac{\mathcal{R}^X_f(t)}{\mu(t)}\ge \int_{S_f}|\nabla f(x)|dx=\int_{\R^d} |\nabla f(x)|dx.
		\end{align}
		Combining \eqref{eq:limsup} and \eqref{eq:liminf}, we conclude the proof of the theorem.
	\end{proof}
	
	\section{Proof of the lower bound}
	\begin{proposition}\label{prop:liminf}
		Let $\Omega$ be a bounded, open set in $\R^d$ with finite perimeter. Then, 
		\begin{align*}
			\liminf_{t\downarrow 0}\frac{\Vol(\Omega)-Q^X_\Omega(t)}{\mu(t)}\ge \Per(\Omega).
		\end{align*}
		\begin{remark}
			When $\Omega$ has $C^{1,1}$ boundary, the above proposition can be proved using a half-space argument similar to the proof of  \cite[Lemma~3.1]{ParkSong2022}. However, the argument in the aforementioned paper requires the $R$-smooth boundary condition, which does not hold when $\partial\Omega$ is not $C^{1,1}$. 
		\end{remark}
	\end{proposition}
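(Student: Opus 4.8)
The plan is to deduce the bound from the smooth case, Theorem~\ref{thm:symmetric_smooth}, by approximating $\mathbbm 1_\Omega$ from inside by smooth functions whose gradients recover the perimeter. Concretely, it suffices to produce, for each $\epsilon>0$, a nonnegative $f=f_\epsilon\in C^\infty_c(\R^d)$ with $f\le\mathbbm 1_\Omega$, $\int_{\R^d}|\nabla f|\,dx\ge\Per(\Omega)-\epsilon$, and
\begin{align*}
\liminf_{t\to 0}\frac{\mathcal R^X_\Omega(t)-\mathcal R^X_f(t)}{\mu(t)}\ge 0,
\end{align*}
where $\mathcal R^X_\Omega(t)=\Vol(\Omega)-Q^X_\Omega(t)=\mathcal R^X_{\mathbbm 1_\Omega}(t)$ and $\mathcal R^X_f(t)$ is as in Section~\ref{sec:4}. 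Granting this, Theorem~\ref{thm:symmetric_smooth} gives $\liminf_{t\to0}\mathcal R^X_\Omega(t)/\mu(t)\ge\lim_{t\to0}\mathcal R^X_f(t)/\mu(t)=\int_{\R^d}|\nabla f|\,dx\ge\Per(\Omega)-\epsilon$, and letting $\epsilon\downarrow 0$ finishes the proof.

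For the construction of $f$, I would use the distance function $d_\Omega(x)=\dist(x,\R^d\setminus\Omega)$, which is $1$-Lipschitz on $\R^d$, positive precisely on $\Omega$, and satisfies $|\nabla d_\Omega|=1$ a.e.\ on $\Omega$. Writing $\Omega_r=\{d_\Omega>r\}$, we have $\mathbbm 1_{\Omega_r}\uparrow\mathbbm 1_\Omega$ in $L^1$ as $r\downarrow 0$, and since $\partial^*\Omega_r\subseteq\{d_\Omega=r\}$, lower semicontinuity of the perimeter yields $\liminf_{r\to0}\mathcal H^{d-1}(\{d_\Omega=r\})\ge\liminf_{r\to0}\Per(\Omega_r)\ge\Per(\Omega)$. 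Fix $r_0>0$ small enough that $\mathcal H^{d-1}(\{d_\Omega=r\})\ge\Per(\Omega)-\epsilon/2$ for all $0<r<2r_0$, pick $\psi\in C^\infty(\R)$ nondecreasing with $\psi\equiv 0$ on $(-\infty,r_0]$ and $\psi\equiv 1$ on $[2r_0,\infty)$; by the coarea formula $\int|\nabla(\psi\circ d_\Omega)|\,dx=\int_{r_0}^{2r_0}\psi'(r)\,\mathcal H^{d-1}(\{d_\Omega=r\})\,dr\ge\Per(\Omega)-\epsilon/2$. Since $\psi\circ d_\Omega$ is only Lipschitz, I then set $f=\rho_\delta*(\psi\circ d_\Omega)$ with mollification scale $\delta<r_0/2$; for $\delta$ small this $f$ is in $C^\infty_c(\R^d)$, satisfies $0\le f\le 1$, vanishes on a neighborhood of $\R^d\setminus\Omega$ (so $f\le\mathbbm 1_\Omega$, and $f$ is constant equal to $1$ on $\{d_\Omega>2r_0+\delta\}$, constant equal to $0$ on $\{d_\Omega<r_0-\delta\}$), and $\int|\nabla f|\,dx\ge\Per(\Omega)-\epsilon$.

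The remaining --- and main --- step is the comparison $\liminf_{t\to0}(\mathcal R^X_\Omega(t)-\mathcal R^X_f(t))/\mu(t)\ge 0$. Using $f\le\mathbbm 1_\Omega$ one decomposes, for $x\in\Omega$, $\sup_{0\le s\le t}(f(x)-f(X_s))$ according to whether $X$ leaves $\Omega$ on $[0,t]$: on the exit event $\inf_{0\le s\le t}f(X_s)=0$, so $\sup_{0\le s\le t}(f(x)-f(X_s))=f(x)\le 1=\mathbbm 1_\Omega(x)-\inf_{0\le s\le t}\mathbbm 1_\Omega(X_s)$; integrating, the exit contribution to $\mathcal R^X_f(t)$ is $\le\mathcal R^X_\Omega(t)$, whence
\begin{align*}
\mathcal R^X_\Omega(t)\ \ge\ \mathcal R^X_f(t)\ -\ \int_\Omega\mathbb E_x\Big[\mathbbm 1\{\tau^X_\Omega>t\}\big(f(x)-\inf_{0\le s\le t}f(X_s)\big)\Big]\,dx\ =:\ \mathcal R^X_f(t)-T(t).
\end{align*}
It then remains to show $\limsup_{t\to0}T(t)/\mu(t)$ can be made arbitrarily small (after $\epsilon\downarrow 0$); this is the hard part. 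The integrand of $T(t)$ vanishes where $f$ is constant, so only $x$ with $d_\Omega(x)\in(r_0-\delta,\,2r_0+\delta)$ contribute a "genuine" amount, plus $x$ with $d_\Omega(x)>2r_0+\delta$ for which decreasing $f$ forces $X$ to travel a macroscopic distance --- an event of probability $o(\mu(t))$ by isotropy and Assumption~\ref{assump1}, so contributing $o(\mu(t))$. For the thin shell $\{r_0-\delta<d_\Omega<2r_0+\delta\}$ one has to go beyond the crude Lipschitz bound on $f(x)-\inf_s f(X_s)$ (which only gives $O(\Per(\Omega)\mu(t))$); the point is that, conditioned on $\tau^X_\Omega>t$ for a starting point in this shell, the running oscillation of $d_\Omega(X_s)$ that is "wasted" without exiting $\Omega$ is of lower order, which I expect to extract using the localization estimates \eqref{eq:lim1}--\eqref{eq:lim3} of Lemma~\ref{lem:limits} together with a coarea slicing of the shell by the level sets of $d_\Omega$ at a Lebesgue point $r_0$. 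I expect the shell estimate --- namely that the no-exit functional $T(t)$ is negligible relative to $\mu(t)$ up to an $O(\epsilon)\mu(t)$ term --- to be the genuine obstacle; granting it, $\liminf_{t\to0}\mathcal R^X_\Omega(t)/\mu(t)\ge\int|\nabla f|\,dx-O(\epsilon)\ge\Per(\Omega)-O(\epsilon)$, and $\epsilon\downarrow 0$ completes the argument.
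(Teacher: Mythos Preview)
Your plan has a genuine gap at exactly the place you flag as ``the hard part'': the no-exit term $T(t)$ is \emph{not} $o(\mu(t))$, and there is no way to make it $o(\epsilon)\mu(t)$ by choosing $r_0,\delta$ small. On the shell $\{r_0-\delta<d_\Omega<2r_0+\delta\}$, the Lipschitz bound $f(x)-\inf_{s\le t}f(X_s)\le\|\psi'\|_\infty\sup_{s\le t}|X_s-x|$ is essentially sharp: conditioning on $\{\tau^X_\Omega>t\}$ removes an event of probability $o(1)$ for each fixed $x$ in the shell (since $d_\Omega(x)\ge r_0-\delta$ is macroscopic), so it does not reduce the expected one-sided displacement below $\mu(t)(1+o(1))$. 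Integrating, $T(t)\sim\mu(t)\int_{\text{shell}}|\nabla f|\,dx\sim\mu(t)\int_{\R^d}|\nabla f|\,dx\sim\mu(t)\Per(\Omega)$, and your inequality collapses to $\liminf_{t\to0}\mathcal R^X_\Omega(t)/\mu(t)\ge\int|\nabla f|\,dx-\Per(\Omega)\approx 0$. A one-dimensional check with $\Omega=(0,1)$ and Brownian motion confirms this: for $x\in(r_0,2r_0)$ one has $\E_x\big[(x-\inf_{s\le t}X_s)\mathbbm 1\{\tau>t\}\big]=\mu(t)(1+o(1))$, giving $T(t)\to 2\mu(t)=\Per(\Omega)\mu(t)$.

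The paper avoids this entirely by mollifying $\mathbbm 1_\Omega$ rather than approximating it from inside. With $f_\epsilon=\mathbbm 1_\Omega*\rho_\epsilon$ one has, for \emph{every} $t$, the exact inequality $\mathcal R^X_{f_\epsilon}(t)\le\mathcal R^X_{\mathbbm 1_\Omega}(t)$: by translation invariance,
\[
\E_x\Big[\inf_{s\le t}f_\epsilon(X_s)\Big]=\E_0\Big[\inf_{s\le t}\int\mathbbm 1_\Omega(X_s+x-y)\rho_\epsilon(y)\,dy\Big]\ge\int\E_{x-y}\Big[\inf_{s\le t}\mathbbm 1_\Omega(X_s)\Big]\rho_\epsilon(y)\,dy,
\]
and integrating in $x$ gives $Q^X_{f_\epsilon}(t)\ge Q^X_\Omega(t)$, hence $\mathcal R^X_{f_\epsilon}(t)\le\mathcal R^X_\Omega(t)$. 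Now Theorem~\ref{thm:symmetric_smooth} and lower semicontinuity of the variation finish the proof in two lines. The point is that ``infimum of an average $\ge$ average of infima'' replaces your shell comparison with a clean monotonicity that needs no error control; your construction $f\le\mathbbm 1_\Omega$ gives no analogous inequality.
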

	The above result is proved using approximation argument using mollifiers. 
	Fix a collection of mollifiers $(\rho_\varepsilon)_{\varepsilon>0}\in C^\infty_c(\R^d)$ such that $\int_{\R^d}\rho_\varepsilon(x)dx=1$ and for any $\varepsilon>0$ and $f\in L^1(\R^d)$, define the convolution
	\begin{align*}
		f_\varepsilon = f\ast\rho_\epsilon.
	\end{align*}
	\begin{lemma}\label{lem:mollifier}
		For any $f\in L^1(\R^d)$ with $f\ge 0$ and $\epsilon>0$, we have
		\begin{align*}
			\mathcal{R}^X_{f_\epsilon}(t)\le \mathcal{R}^X_f(t)
		\end{align*}
		for all $t\ge 0$,
		where $f_\epsilon = f\ast \rho_\epsilon$.
	\end{lemma}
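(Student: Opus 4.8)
The plan is to reduce the claimed inequality to the single assertion $Q^X_{f_\epsilon}(t)\ge Q^X_f(t)$. Since $\rho_\epsilon$ is a probability density, Tonelli's theorem gives $\int_{\R^d} f_\epsilon(x)\,dx=\int_{\R^d} f(x)\,dx$, so $\mathcal{R}^X_{f_\epsilon}(t)\le\mathcal{R}^X_f(t)$ is \emph{equivalent} to $Q^X_{f_\epsilon}(t)\ge Q^X_f(t)$, and it is this last inequality I would establish.

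To prove it, I would start from the representation $f_\epsilon(y)=\int_{\R^d}f(y-z)\rho_\epsilon(z)\,dz$. For every $s\in[0,t]$ and every $z\in\R^d$ one trivially has $f(X_s-z)\ge\inf_{0\le u\le t}f(X_u-z)$; multiplying by $\rho_\epsilon(z)\ge 0$ and integrating in $z$ yields $f_\epsilon(X_s)\ge\int_{\R^d}\bigl(\inf_{0\le u\le t}f(X_u-z)\bigr)\rho_\epsilon(z)\,dz$, and because the right-hand side no longer depends on $s$, taking $\inf_{0\le s\le t}$ on the left preserves the bound:
\begin{align*}
	\inf_{0\le s\le t}f_\epsilon(X_s)\ \ge\ \int_{\R^d}\Bigl(\inf_{0\le s\le t}f(X_s-z)\Bigr)\rho_\epsilon(z)\,dz .
\end{align*}
This is just the elementary fact that an infimum of averages dominates the average of the infima. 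Taking $\E_x$, using Tonelli (all integrands are nonnegative) to pull the $z$-integral outside the expectation, then integrating over $x\in\R^d$ and interchanging the $x$- and $z$-integrals, gives
\begin{align*}
	Q^X_{f_\epsilon}(t)\ \ge\ \int_{\R^d}\rho_\epsilon(z)\left(\int_{\R^d}\E_x\Bigl[\inf_{0\le s\le t}f(X_s-z)\Bigr]dx\right)dz .
\end{align*}

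The final step is to identify the inner integral as $Q^X_f(t)$ for every fixed $z$. Substituting $x\mapsto x+z$ and invoking translation invariance of $X$ in the form $\E_{x+z}[\Phi(X)]=\E_x[\Phi(X+z)]$ for nonnegative path functionals $\Phi$, applied with $\Phi(X)=\inf_{0\le s\le t}f(X_s-z)$ so that $\Phi(X+z)=\inf_{0\le s\le t}f(X_s)$, turns $\E_{x+z}[\inf_s f(X_s-z)]$ into $\E_x[\inf_s f(X_s)]$; hence $\int_{\R^d}\E_x[\inf_s f(X_s-z)]\,dx=Q^X_f(t)$ for every $z$. Since $\int_{\R^d}\rho_\epsilon=1$, this yields $Q^X_{f_\epsilon}(t)\ge Q^X_f(t)$ and therefore the lemma.

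The only genuinely delicate point is measurability, which is needed to license the two Fubini–Tonelli interchanges: one must know that $(z,\omega)\mapsto\inf_{0\le s\le t}f(X_s(\omega)-z)$ is jointly measurable. For the nonnegative measurable $f$ considered here this is handled exactly as in the proof of the preceding lemma, by writing $\{\inf_{0\le s\le t}f(X_\cdot-z)\ge u\}$ in terms of the exit time $\tau^X_{E}$ of the Borel set $E=\{y:f(y-z)>u\}$ and using the c\`adl\`ag regularity of the paths (together with the usual augmentation of the filtration). Once this is in place, the remainder of the argument is precisely the two elementary inequalities displayed above combined with translation invariance, and I expect no further obstruction.
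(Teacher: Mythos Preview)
Your proof is correct and follows essentially the same route as the paper: both arguments reduce the claim to $Q^X_{f_\epsilon}(t)\ge Q^X_f(t)$ via $\int f_\epsilon=\int f$, then use the elementary inequality $\inf_s\int f(X_s-z)\rho_\epsilon(z)\,dz\ge\int\inf_s f(X_s-z)\rho_\epsilon(z)\,dz$ together with translation invariance and Fubini to conclude. The only cosmetic difference is ordering: the paper first shifts to $\P_0$ and applies the infimum--integral inequality there, whereas you apply it pathwise under $\P_x$ and invoke translation invariance at the end via the change of variable $x\mapsto x+z$; your added remark on joint measurability is a point the paper leaves implicit.
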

	\begin{proof}
		We note that $\int_{\R^d} f_\epsilon(x)dx = \int_{\R^d} f(x)dx$ for all $\epsilon>0$. Also, by translation invariance of $X_t$ and Fubini's theorem we get 
		\begin{align*}
			\E_x\left[\inf_{0\le s\le t} f_\epsilon(X_s)\right] &= \E_0\left[\inf_{0\le s\le t} f_\epsilon(X_s+x)\right] \\
			&=\E_0\left[\inf_{0\le s\le t} \int_{\R^d}f(X_s+x-y)\rho_\epsilon(y)dy\right] \\
			& \ge \E_0\left[\int_{\R^d} \inf_{0\le s\le t} f(X_s+x-y)\rho_\epsilon(y) dy\right] \\
			& = \int_{\R^d}\E_{x-y}\left[\inf_{0\le s\le t} f(X_s)\right]\rho_\epsilon(y)dy.
		\end{align*}
		This shows that for all $\epsilon>0$,
		\begin{align*}
			\int_{\R^d} \E_{x}\left[\inf_{0\le s\le t} f_\epsilon(X_s)\right]dx\ge \int_{\R^d} \E_{x}\left[\inf_{0\le s\le t} f(X_s)\right]dx.
		\end{align*}
		This completes the proof of the lemma.
	\end{proof}
	\begin{proof}[Proof of Proposition~\ref{prop:liminf}] Let us consider $f=\mathbbm{1}_\Omega$. By Lemma~\ref{lem:mollifier} and Theorem~\ref{thm:symmetric_smooth}, for any $\epsilon>0$,
		\begin{align*}
			\liminf_{t\downarrow 0}\frac{\Vol(\Omega)-Q^X_\Omega(t)}{\mu(t)}\ge \liminf_{t\downarrow 0}\frac{\mathcal{R}^X_{f_\epsilon}(t)}{\mu(t)}=\int_{\R^d} |\nabla f_\epsilon(x)|dx.
		\end{align*}
		As $\lim_{\epsilon\to 0}\|f_\epsilon -\mathbbm{1}_\Omega\|_{L^1}=0$, by lower semi-continuity of the variation it follows that
		\begin{align*}
			\liminf_{t\downarrow 0}\frac{\Vol(\Omega)-Q^X_\Omega(t)}{\mu(t)}\ge \liminf_{\epsilon\to 0} \int_{\R^d} |\nabla f_\epsilon(x)|dx\ge \|D\mathbbm{1}_\Omega\|=\Per(\Omega).
		\end{align*}
		This completes the proof of the proposition.
	\end{proof}
	\section{Proof of Theorem~\ref{thm:main}}\label{sec:6} We prove a stronger version of Theorem~\ref{thm:main}.
	\begin{theorem}\label{thm:main2}
		Let $\phi:\R^d\to\R$ be a $C^{1,\kappa}$ function with $0<\kappa<1$ and assume that 
		\begin{enumerate}[(i)]
			\item $\Omega=\phi^{-1}(0,\infty)$ is pre-compact,
			\item $|\nabla \phi|$ is bounded away from $0$ in a neighborhood of $\partial\Omega$.
		\end{enumerate}
		Then for any translation invariant isotropic process $X=(X_t)_{t\ge 0}$ satisfying Assumption~\ref{assump1} we have
		\begin{align*}
			\Per(\Omega)\le \liminf_{t\downarrow 0}\frac{\Vol(\Omega)-Q^X_\Omega(t)}{\mu(t)}\le \limsup_{t\downarrow 0}\frac{\Vol(\Omega)-Q^X_\Omega(t)}{\mu(t)}\le \frac{\sup_{\partial\Omega}|\nabla\phi|}{\inf_{\partial\Omega}|\nabla \phi|}\Per(\Omega).
		\end{align*}
	\end{theorem}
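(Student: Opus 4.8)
The lower bound requires no new work. Since $|\nabla\phi|$ is bounded away from $0$ near $\partial\Omega$, the implicit function theorem shows that $\partial\Omega=\{\phi=0\}$ is locally a $C^{1,\kappa}$ graph, so $\Omega=\phi^{-1}(0,\infty)$ is a bounded open set with $C^1$ boundary; in particular it has finite perimeter and $\Per(\Omega)=\mathcal{H}^{d-1}(\partial\Omega)$. Thus $\liminf_{t\to0}\mu(t)^{-1}(\Vol(\Omega)-Q^X_\Omega(t))\ge\Per(\Omega)$ is exactly Proposition~\ref{prop:liminf}. All the content is in the upper bound, and I would prove it by estimating $\P_x(\tau^X_\Omega\le t)$ directly rather than by comparing $\mathbbm{1}_\Omega$ to smooth functions: the smoothing argument used for the lower bound cannot give a matching upper bound, because approximating $\mathbbm{1}_\Omega$ in $L^1$ by a $C^{1,\kappa}_c$ function introduces an error of order one, not of order $\mu(t)$.

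Here is the plan for the upper bound. Write $\Vol(\Omega)-Q^X_\Omega(t)=\int_\Omega\P_x(\tau^X_\Omega\le t)\,dx$. By right-continuity of the paths, on $\{\tau^X_\Omega\le t\}$ one has $\phi(X_{\tau^X_\Omega})\le0$, hence $\inf_{0\le s\le t}\phi(X_s)\le0$; applying Lemma~\ref{lem:taylor_bound} to $\phi$ (with $L$ the Hölder constant of $\nabla\phi$) and splitting off the remainder at level $\theta\phi(x)$ for a fixed $\theta\in(0,1)$ gives
\[
	\P_x(\tau^X_\Omega\le t)\le\P_x\!\Big(\sup_{0\le s\le t}\langle x-X_s,\nabla\phi(x)\rangle\ge(1-\theta)\phi(x)\Big)+\P_x\!\Big(\sup_{0\le s\le t}|X_s-x|^{1+\kappa}\ge\tfrac{(1+\kappa)\theta}{L}\phi(x)\Big).
\]
Translation invariance and isotropy identify the first probability with $\P_0\big(\sup_{0\le s\le t}X^{(1)}_s\ge(1-\theta)\phi(x)/|\nabla\phi(x)|\big)$ and the second with $\P_0\big(\sup_{0\le s\le t}|X_s|\ge w(\phi(x))\big)$, where $w(u)=((1+\kappa)\theta u/L)^{1/(1+\kappa)}$; denote the integrals of these two quantities over $\Omega$ by $I_1(t)$ and $I_2(t)$.

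Both integrals are then computed by the coarea formula for $\phi$. Fix a small $\eta>0$. Since $\Omega$ is pre-compact and $\phi$ continuous, for $\eta$ small the set $\{0<\phi\le\eta\}$ lies inside the neighborhood of $\partial\Omega$ on which $|\nabla\phi|$ is bounded away from $0$ (in particular $\{\nabla\phi=0\}\cap\Omega\subseteq\{\phi>\eta\}$), and $\overline{\{0<\phi\le\eta\}}\to\partial\Omega$ in Hausdorff distance as $\eta\downarrow0$; hence $a_\eta:=\inf_{\{0<\phi\le\eta\}}|\nabla\phi|\to\inf_{\partial\Omega}|\nabla\phi|$, $b_\eta:=\sup_{\{0<\phi\le\eta\}}|\nabla\phi|\to\sup_{\partial\Omega}|\nabla\phi|$, and, using that by the implicit function theorem the level sets $\{\phi=u\}$ are for small $u$ $C^{1,\kappa}$ hypersurfaces depending continuously on $u$, $S_\eta:=\sup_{0<u\le\eta}\mathcal{H}^{d-1}(\{\phi=u\})\to\mathcal{H}^{d-1}(\partial\Omega)=\Per(\Omega)$. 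On $\{\phi>\eta\}$ both integrands are bounded by the probability that $\sup_{0\le s\le t}X^{(1)}_s$, resp. $\sup_{0\le s\le t}|X_s|$, exceeds a fixed positive constant, which is $o(\mu(t))$ by Assumption~\ref{assump1}; on $\{0<\phi\le\eta\}$ the coarea formula, the changes of variable $v=(1-\theta)u/b_\eta$ in $I_1$ and $u=Lw^{1+\kappa}/((1+\kappa)\theta)$ in $I_2$, and the bounds $a_\eta\le|\nabla\phi|\le b_\eta$, $\mathcal{H}^{d-1}(\{\phi=u\})\le S_\eta$, $w^\kappa\le w(\eta)^\kappa$ reduce $I_1(t)$ and $I_2(t)$ to $\tfrac{S_\eta b_\eta}{a_\eta(1-\theta)}\E_0[\sup_{0\le s\le t}X^{(1)}_s\wedge c]$ and $\tfrac{2dL\,S_\eta}{a_\eta\theta}w(\eta)^\kappa$ times $(2d)^{-1}\E_0[\sup_{0\le s\le t}|X_s|\wedge c]$, up to $o(\mu(t))$. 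By \eqref{eq:lim1} of Lemma~\ref{lem:limits}, $\mu(t)^{-1}\E_0[\sup_{0\le s\le t}X^{(1)}_s\wedge c]\to1$, and by \eqref{eq:lim3} with Assumption~\ref{assump1}, $\limsup_{t\to0}\mu(t)^{-1}\E_0[\sup_{0\le s\le t}|X_s|\wedge c]\le2d$. Combining,
\[
	\limsup_{t\to0}\frac{\Vol(\Omega)-Q^X_\Omega(t)}{\mu(t)}\le\frac{S_\eta\,b_\eta}{a_\eta(1-\theta)}+\frac{2dL\,S_\eta}{a_\eta\,\theta}\,w(\eta)^\kappa,
\]
and letting first $\eta\downarrow0$ (so the last term vanishes and $S_\eta b_\eta/a_\eta\to\tfrac{\sup_{\partial\Omega}|\nabla\phi|}{\inf_{\partial\Omega}|\nabla\phi|}\Per(\Omega)$) and then $\theta\downarrow0$ yields the desired bound.

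I expect the genuinely delicate point to be the continuity of the level-set areas at the critical value, $\mathcal{H}^{d-1}(\{\phi=u\})\to\mathcal{H}^{d-1}(\partial\Omega)$ as $u\downarrow0$, together with $a_\eta\to\inf_{\partial\Omega}|\nabla\phi|$ and $b_\eta\to\sup_{\partial\Omega}|\nabla\phi|$. This is where the $C^{1,\kappa}$ regularity of $\phi$ and the non-degeneracy of $|\nabla\phi|$ near $\partial\Omega$ are used — by representing $\{\phi=u\}$ for small $u$ locally as graphs over $\partial\Omega$ whose gradients converge uniformly as $u\to0$ — and it is the place where the level-set description of the domain, rather than a tubular-neighborhood or uniform-ball argument, is essential. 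The remaining steps (the $\theta$-splitting of the Taylor remainder, its vanishing as $\eta\to0$ via \eqref{eq:lim3}, and the $o(\mu(t))$ estimate on $\{\phi>\eta\}$, including the harmless contribution of the critical set of $\phi$) are routine once Lemma~\ref{lem:limits} is in hand.
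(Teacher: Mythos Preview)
Your proposal is correct and follows essentially the same route as the paper's proof: split $\Omega$ into $\{\phi>\eta\}$ (negligible by Assumption~\ref{assump1}) and the boundary strip $\{0<\phi\le\eta\}$, apply the Taylor estimate of Lemma~\ref{lem:taylor_bound} with a splitting parameter, use isotropy and the coarea formula to reduce to the one-dimensional quantities of Lemma~\ref{lem:limits}, and conclude via the continuity of $\mathcal{H}^{d-1}(\{\phi=u\})$ at $u=0$. The only cosmetic differences are notational (your $\theta$ is the paper's $1-\eta$, your $\eta$ is the paper's $\epsilon$) and organizational (the paper isolates the far-region and near-region estimates as separate lemmas, and handles the remainder term via $\E_0[\sup|X_s|^{1+\kappa}\wedge c]$ rather than your explicit change of variable); you also correctly single out the level-set continuity statement, which the paper proves as Lemma~\ref{lem:perimeter_phi}.
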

	The theorem is proved in several intermediate steps. First, we note the following result about continuity of the perimeters of level sets.
	\begin{lemma}\label{lem:perimeter_phi}
		Let $\phi:\R^d\to\R$ be a $C^{1}$ function such that 
		\begin{enumerate}[(i)]
			\item $\Omega=\phi^{-1}(0,\infty)$ is pre-compact,
			\item $|\nabla\phi|$ is bounded away from $0$ in a neighborhood of $\partial\Omega$. 
		\end{enumerate}
		For any $r>0$, define $\Omega_r=\phi^{-1}(r,\infty)$. Then, 
		\begin{align*}
			\lim_{r\to 0}\mathcal{H}^{d-1}(\partial\Omega_r)=\mathcal{H}^{d-1}(\partial\Omega)
		\end{align*}
	\end{lemma}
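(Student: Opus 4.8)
The plan is to represent, for small $r>0$, the boundary $\partial\Omega_r=\partial\{\phi>r\}$ as the regular level set $\{\phi=r\}$ of $\phi$ inside a fixed neighborhood of $\partial\Omega$ on which $\nabla\phi\neq0$, to show that these level sets stay in that neighborhood and converge to $\partial\Omega$ both in Hausdorff distance and, locally, in the $C^1$ topology of graphs, and then to read off convergence of the $(d-1)$-dimensional areas from the graph area formula. Only the $C^{1}$ hypothesis on $\phi$ is used here; the H\"older exponent plays no role. \emph{Localization.} By (ii) fix an open $U\supseteq\partial\Omega$ and $c>0$ with $|\nabla\phi|\ge c$ on $U$; since $\overline\Omega$ is compact we may take $\overline U$ compact with $|\nabla\phi|\ge c$ on $\overline U$. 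As $\phi$ is continuous and $\Omega=\{\phi>0\}$, one has $\partial\Omega\subseteq\{\phi=0\}$, and at each point of $\partial\Omega$ the nonvanishing of $\nabla\phi$ together with the implicit function theorem shows that $\{\phi=0\}$ is locally a $C^{1}$ graph with $\Omega$ on one side; consequently $\{\phi=0\}=\partial\Omega$ in a neighborhood of $\partial\Omega$, $\partial\Omega$ is a compact $C^{1}$ hypersurface, and $\Per(\Omega)=\mathcal H^{d-1}(\partial\Omega)$.

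\emph{The level sets stay near $\partial\Omega$.} Since $r>0$, $\Omega_r=\{\phi>r\}\subseteq\Omega$, hence $\partial\Omega_r\subseteq\overline{\Omega_r}\subseteq\overline\Omega$ and $\partial\Omega_r\subseteq\{\phi=r\}$. If for some $\delta_0>0$ there were $r_n\downarrow0$ and $x_n\in\partial\Omega_{r_n}$ with $\dist(x_n,\partial\Omega)\ge\delta_0$, then by compactness of $\overline\Omega$ a subsequence converges to $x_\ast$ with $\phi(x_\ast)=\lim r_n=0$ and $\dist(x_\ast,\partial\Omega)\ge\delta_0$; but then $x_\ast\in\overline\Omega\setminus\partial\Omega=\Omega$, forcing $\phi(x_\ast)>0$, a contradiction. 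Hence $\partial\Omega_r\subseteq U$ for all small $r$, and the same argument applied to arbitrary sequences gives $\partial\Omega_r\to\partial\Omega$ in Hausdorff distance. On $U$ we have $\nabla\phi\neq0$, so $\{\phi>r\}$ is locally one-sided along $\{\phi=r\}$ and therefore $\partial\Omega_r=\{\phi=r\}\cap U$.

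\emph{Finite atlas and the limit.} Cover the compact set $\partial\Omega$ by finitely many boxes $C_i=B_i\times I_i\subseteq U$ (after rotating coordinates so that $\partial_d\phi\neq0$ on $C_i$) such that, for $|s|<a_0$, the set $\{\phi=s\}\cap C_i$ is the graph $\{x_d=g_i(x',s):x'\in B_i\}$; here $g_i$ is obtained from the implicit function theorem applied to $(x',x_d,s)\mapsto\phi(x',x_d)-s$ and is $C^{1}$ jointly in $(x',s)$, so $\nabla_{x'}g_i$ is uniformly continuous on $\overline{B_i}\times[-a_0/2,a_0/2]$. By the previous step, for $r$ small every $\partial\Omega_r$ lies in $\bigcup_iC_i$ and $\partial\Omega_r\cap C_i=\{\phi=r\}\cap C_i$ is the graph of $g_i(\cdot,r)$ over $B_i$. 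Fixing a partition of unity $\{\chi_i\}$ with $\chi_i\in C^\infty_c(C_i)$ and $\sum_i\chi_i\equiv1$ on a neighborhood of $\partial\Omega$ containing all these $\partial\Omega_r$, the graph area formula gives
\begin{align*}
\mathcal H^{d-1}(\partial\Omega_r)=\sum_i\int_{B_i}\chi_i\bigl(x',g_i(x',r)\bigr)\sqrt{1+|\nabla_{x'}g_i(x',r)|^2}\,dx'.
\end{align*}
Letting $r\to0$, $g_i(\cdot,r)\to g_i(\cdot,0)$ and $\nabla_{x'}g_i(\cdot,r)\to\nabla_{x'}g_i(\cdot,0)$ uniformly on $\overline{B_i}$, so each of the finitely many summands converges to $\int_{B_i}\chi_i(x',g_i(x',0))\sqrt{1+|\nabla_{x'}g_i(x',0)|^2}\,dx'=\int_{\partial\Omega}\chi_i\,d\mathcal H^{d-1}$; since $\sum_i\chi_i\equiv1$ on $\partial\Omega$ this yields $\mathcal H^{d-1}(\partial\Omega_r)\to\mathcal H^{d-1}(\partial\Omega)$.

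The only real difficulty is the bookkeeping in the last step: one must choose the finite cover so that a single family of boxes simultaneously parametrizes $\partial\Omega$ and all $\partial\Omega_r$ with $r$ small as graphs over the same bases $B_i$, which is precisely where pre-compactness of $\Omega$, the uniform bound $|\nabla\phi|\ge c$, and the Hausdorff convergence $\partial\Omega_r\to\partial\Omega$ are used. I note that the coarea formula alone would give only the averaged statement $a^{-1}\int_0^a\mathcal H^{d-1}(\partial\Omega_s)\,ds\to\mathcal H^{d-1}(\partial\Omega)$, not the pointwise limit at $0$, which is why the argument proceeds through graphs.
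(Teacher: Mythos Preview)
Your proof is correct and follows essentially the same approach as the paper's: both build a finite $C^1$ atlas around the compact set $\partial\Omega$ via the implicit/inverse function theorem, use a subordinate partition of unity to write $\mathcal H^{d-1}(\partial\Omega_r)$ as a finite sum of integrals over fixed base domains, and pass to the limit using continuity of the Jacobian in the level parameter. The only cosmetic difference is that you parametrize level sets as graphs $x_d=g_i(x',s)$ and invoke the graph area formula directly, whereas the paper straightens $\phi$ by a local diffeomorphism $\varphi_x$ so that $\phi\circ\varphi_x(y)=|\nabla\phi(x)|y_1$ and controls the area distortion of the leaves by $1\pm\varepsilon$; your explicit Hausdorff-convergence argument showing $\partial\Omega_r\subseteq U$ is a welcome detail that the paper handles more tersely.
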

	\begin{notation}
		For the rest of the article, we denote  
		\begin{equation*}
			\begin{aligned}
				\Omega'_r=\{x:0<\phi(x)<r\}, \quad \Omega_r=\{x:\phi(x)>r\}, \quad r>0.
			\end{aligned}
		\end{equation*}
		Then, $\partial\Omega_r=\phi^{-1}(r)$.
	\end{notation}
	For a proof of the above lemma, we refer the reader to the Mathoverflow post \cite{mathoverflow} and answer by Mathoverflow user \href{https://mathoverflow.net/users/56624/kostya-i}{Kostya\_I}, where the lemma has been proved for smooth functions $\phi$, but the same argument works for $C^1$ functions as the proof only relies on inverse function theorem. For the sake of completeness, we provide the proof below with some minor modifications.
	\begin{proof} 
		We claim that every point $x\in\phi^{-1}(0)$ has a neighborhood $V_x$ that is a diffeomorphic image $\varphi_x(Q_{r_x})$ of a box $Q_{r_x}=\{|y_i|\leq r_x,\;1\leq i \leq d\}$, such that $\phi(\varphi(y))=|\nabla \phi(x)|\cdot y_1$, and such that $\varphi_x(0)=x$ and $D\varphi_x(0)$ is a rotation. Indeed, without loss of generality we may assume that $x=0$, and, by rotating, that $\nabla \phi (0)/|\nabla \phi (0)|=e_1,$ the first basis vector. Consider the function $g:\mathbb{R}^d\to \mathbb{R}^d$ defined by $g(x)=(\phi(x)/|\nabla \phi (0)|,x_2,\dots,x_d)$. Then $Dg(0)$ is the identity, and $\varphi_x$ is the inverse $g^{-1}$ provided by the inverse function theorem. Given $\varepsilon>0$, by choosing smaller $r_x$ if necessary, we can ensure that the restriction of $\varphi_x$ to the leaves $\{y_1=r\}$ distorts the $(d-1)$-area by no more than $1+\varepsilon$, that is, in $Q_{\delta_x}$,
		\begin{align}\label{eq:jacobian}
			1-\varepsilon<\left(\det_{2\leq i,j\leq d}\left(\partial_{y_i}\varphi\cdot\partial_{y_j}\varphi\right)\right)^\frac12\leq 1+\varepsilon.
		\end{align}
		By compactness, we can choose a finite cover $V_i=V_{x_i}$, $i=1,\dots,n$ of $\phi^{-1}(0)$. Then, for $\epsilon_0>0$ small enough, we have $\overline{\Omega}'_{\epsilon_0}:=\{x:0\le \phi(x)\le \epsilon_0\}\subset \cup_{i=1}^n V_i$. Let $V_1,\dots,V_n$ be a finite cover of $\Omega'_{\epsilon_0}$, and we can pick a partition of unity $f_0,\dots,f_n$ subordinate to that cover. We have for $0<r<\epsilon_0$,
		\begin{equation*}
			\mathcal{H}^{d-1}(\phi^{-1}(r))=\sum_{i=1}^n\int_{\phi^{-1}(r)} f_i(w) \mathcal{H}^{d-1}(dw).
		\end{equation*}
		Hence, by changing the variable and \eqref{eq:jacobian},
		\begin{equation}\label{eq:changed_var}
			(1-\varepsilon)\sum_{i=1}^n\int_{Q^r_i} f_i\circ\varphi_i dy_2\ldots dy_d\leq \mathcal{H}^{d-1}(\phi^{-1}(r))\leq (1+\varepsilon)\sum_{i=1}^n\int_{Q^r_i} f_i\circ\varphi_i dy_2\ldots dy_d,
		\end{equation}
		where $Q^r_i=Q_i\cap\{y_1=r\}$.
		As $r\to 0$, the terms in the right-hand side tend to 
		\begin{align*}
			(1+\varepsilon)\int_{Q^0_i}f_i\circ\varphi_i dy_2\ldots dy_d\leq (1+\varepsilon)^2\int_{\phi^{-1}(0)} f_i(w) \mathcal{H}^{d-1}(dw),
		\end{align*}
		and similarly for the left hand side of \eqref{eq:changed_var}. This means that 
		\begin{align*}
			(1-\varepsilon)^2\mathcal{H}^{d-1}(\phi^{-1}(0))&\leq\liminf_{r\to 0} \mathcal{H}^{d-1}(\phi^{-1}(r)), \\
			\limsup_{r\to 0} \mathcal{H}^{d-1}(\phi^{-1}(r))&\leq (1+\varepsilon)^2\mathcal{H}^{d-1}(\phi^{-1}(0)),
		\end{align*}
		and since $\varepsilon>0$ is arbitrary and $\phi^{-1}(r)=\partial\Omega_r$ for any $r\ge 0$, the proof the lemma follows.
	\end{proof}

	\begin{lemma}\label{lem:away}
		For any $\varepsilon>0$,
		\begin{align*}
			\lim_{t\downarrow 0}\frac{1}{\mu(t)}\int_{\Omega_\epsilon}\P_x\left(\inf_{0\le s\le t} \phi(X_s)\le 0\right)dx =0
		\end{align*}
	\end{lemma}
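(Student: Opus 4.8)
The plan is to control the integrand by a uniform small--ball estimate for $X$ and then to invoke Assumption~\ref{assump1}; only $\phi\in C^1$ and the pre-compactness of $\Omega$ enter, not hypothesis (ii) of Theorem~\ref{thm:main2}. I would first fix $R>0$ with $\overline{\Omega}\subseteq B_{R/2}(0)$ (possible since $\overline{\Omega}$ is compact) and set $M:=\sup_{\overline{B_R(0)}}|\nabla\phi|<\infty$, so that $\phi$ is $M$--Lipschitz on the convex set $B_R(0)$. For fixed $\varepsilon>0$ put $\rho:=\min\{\varepsilon/(2M),\,R/2\}>0$. The key claim is that for every $x\in\Omega_\varepsilon$ and every $t\ge 0$,
\begin{align*}
\Big\{\inf_{0\le s\le t}\phi(X_s)\le 0\Big\}\subseteq\Big\{\sup_{0\le s\le t}|X_s-x|>\rho\Big\}.
\end{align*}
Indeed, if $\sup_{0\le s\le t}|X_s-x|\le\rho$, then $|X_s|\le|x|+\rho<R/2+R/2=R$, so $X_s\in B_R(0)$ for all $s\le t$, and the Lipschitz bound together with $\phi(x)>\varepsilon$ gives $\phi(X_s)\ge\phi(x)-M\rho>\varepsilon-\varepsilon/2=\varepsilon/2$ for every $s\le t$; hence $\inf_{0\le s\le t}\phi(X_s)\ge\varepsilon/2>0$, and the claim follows by contraposition. (This formulation sidesteps any issue of the infimum over a c\`adl\`ag path failing to be attained.)

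Given the claim, translation invariance gives $\P_x(\inf_{0\le s\le t}\phi(X_s)\le 0)\le\P_0(\sup_{0\le s\le t}|X_s|>\rho)$, and isotropy together with the reflection symmetry of $X^{(1)}$ --- exactly as in the proof of Lemma~\ref{lem:limits}, where $\P_0(\sup_{0\le s\le t}|X_s|>a)\le 2d\,\P_0(\sup_{0\le s\le t}X^{(1)}_s>a/\sqrt{d})$ is used --- bounds the latter by $2d\,\P_0(\sup_{0\le s\le t}X^{(1)}_s>\rho/\sqrt{d})$. Since $\Omega_\varepsilon\subseteq\Omega$ has finite volume, integrating over $\Omega_\varepsilon$ yields
\begin{align*}
\frac{1}{\mu(t)}\int_{\Omega_\varepsilon}\P_x\Big(\inf_{0\le s\le t}\phi(X_s)\le 0\Big)\,dx\le\frac{2d\,\Vol(\Omega)}{\mu(t)}\,\P_0\Big(\sup_{0\le s\le t}X^{(1)}_s>\tfrac{\rho}{\sqrt{d}}\Big),
\end{align*}
and the right-hand side tends to $0$ as $t\to 0$ by Assumption~\ref{assump1}.

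Every step is elementary, so I do not expect a genuine obstacle. The one point that needs attention is to apply the Lipschitz estimate only where $|\nabla\phi|$ is controlled, which is ensured automatically by taking the comparison radius $\rho\le R/2$, so that a path staying within $\rho$ of a point $x\in B_{R/2}(0)$ cannot leave $B_R(0)$; the rest is a matter of bookkeeping constants.
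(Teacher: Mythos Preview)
Your proof is correct. The paper argues differently: it invokes Lemma~\ref{lem:taylor_bound} to write
\[
\inf_{0\le s\le t}\phi(X_s)\ge \phi(x)+\inf_{0\le s\le t}\langle X_s-x,\nabla\phi(x)\rangle-\tfrac{L}{1+\kappa}\sup_{0\le s\le t}|X_s-x|^{1+\kappa},
\]
splits the event $\{\inf\phi(X_s)\le 0\}$ into two pieces (one controlled by $\phi(x)/(2\|\nabla\phi\|_\infty)$, one by $(\phi(x)(1+\kappa)/L)^{1/(1+\kappa)}$), and then appeals to Assumption~\ref{assump1} for each. Your route replaces this Taylor decomposition by the single observation that $\phi$ is Lipschitz on a large ball containing $\overline{\Omega}$, so a path that stays within a fixed radius $\rho$ of $x\in\Omega_\varepsilon$ can never reach $\{\phi\le 0\}$. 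This is more elementary --- it uses only $\phi\in C^1$, not the H\"older exponent $\kappa$ --- and collapses the two tail terms into one; the paper's version, on the other hand, simply recycles the Taylor machinery already set up for Lemma~\ref{lem:E_phi_0_e}, where the finer $C^{1,\kappa}$ remainder genuinely matters. For this particular lemma your argument is the cleaner one.
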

	\begin{proof}
		
		Invoking Lemma~\ref{lem:taylor_bound} we have
		\begin{align*}
			\inf_{0\le s\le t} \phi(X_s)\ge \phi(x)+\inf_{0\le s\le t}\langle X_s-x,\nabla \phi(x)\rangle-\frac{L}{1+\kappa}\sup_{0\le s\le t}|X_s-x|^{1+\kappa}.
		\end{align*}
		Therefore, 
		\begin{align*}
			& \ \ \ \ \P_x\left(\inf_{0\le s\le t} \phi(X_s)\le 0\right) \\
			&\le \P_x\left(\sup_{0\le s\le t} \langle X_s-x,\nabla \phi(x)\rangle+\frac{L}{1+\kappa}\sup_{0\le s\le t}|X_s-x|^{1+\kappa}>\phi(x)\right) \\
			&\le \P_x\left(\sup_{0\le s\le t} |X_s-x|>\frac{\phi(x)}{2M}\right)+\P_x\left(\sup_{0\le s\le t}|X_s-x|>\left(\frac{\phi(x)}{L'}\right)^{1/1+\kappa}\right),
		\end{align*}
		where $M=\|\nabla \phi\|_\infty$ and $L'=L/(1+\kappa)$. As a result, for any $x\in \Omega_\epsilon$, Assumption~\ref{assump1} implies that
		\begin{align*}
			&\ \ \ \ \lim_{t\downarrow 0}\frac{1}{\mu(t)}\int_{\Omega_\epsilon}\P_x\left(\inf_{0\le s\le t} \phi(X_s)\le 0\right)dx \\
			&\le \lim_{t\downarrow 0}\frac{1}{\mu(t)}\left(\P_0\left(\sup_{0\le s\le t}|X_s|>\frac{\varepsilon}{2M}\right)+\P_0\left(\sup_{0\le s\le t} |X_s|>\left(\frac{\varepsilon}{L'}\right)^{1/1+\kappa}\right)\right)\Vol(\Omega) \\
			& = 0.
		\end{align*}
		This completes the proof of the lemma.
	\end{proof}
	
	\begin{lemma}\label{lem:E_phi_0_e}
		For any $\eta\in (0,1)$ and $\epsilon>0$,
		\begin{align*}
			&\limsup_{t\downarrow 0}\frac{1}{\mu(t)}\int_{\Omega'_\epsilon}\P_x\left(\inf_{0\le s\le t}\phi(X_s)\le 0\right)dx \\
			&\le \sup_{0\le v\le \epsilon}\mathcal{H}^{d-1}(\partial\Omega_v)\frac{M_\epsilon}{\eta m_\epsilon}+ 2\sup_{0\le v\le \epsilon}\mathcal{H}^{d-1}(\partial\Omega_v)\frac{(1-\eta)^{-\frac{\kappa}{\kappa+1}}}{m_\epsilon} \epsilon^{\frac{\kappa}{\kappa+1}},
		\end{align*} 
		where $M_\epsilon=\sup_{x\in \Omega'_\epsilon} |\nabla \phi(x)|$ and $m_\epsilon=\inf_{x\in \Omega'_\epsilon} |\nabla \phi(x)|$.
	\end{lemma}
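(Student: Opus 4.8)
The plan is to estimate the exit probability $\P_x\big(\inf_{s\le t}\phi(X_s)\le 0\big)$ for $x$ in the collar $\Omega'_\epsilon$ by linearising $\phi$ about the starting point $x$, treating the linear part and the H\"older remainder separately, and then integrating over $\Omega'_\epsilon$ with the coarea formula. First, Lemma~\ref{lem:taylor_bound} (write $L'=L/(1+\kappa)$) gives
\[
\inf_{s\le t}\phi(X_s)\ \ge\ \phi(x)+\inf_{s\le t}\langle X_s-x,\nabla\phi(x)\rangle-L'\sup_{s\le t}|X_s-x|^{1+\kappa},
\]
so that for every $\eta\in(0,1)$,
\[
\Big\{\inf_{s\le t}\phi(X_s)\le 0\Big\}\subseteq\Big\{\sup_{s\le t}\langle x-X_s,\nabla\phi(x)\rangle\ge\eta\,\phi(x)\Big\}\cup\Big\{L'\sup_{s\le t}|X_s-x|^{1+\kappa}\ge(1-\eta)\phi(x)\Big\}.
\]
I would bound the probability of the left-hand event by the sum of the probabilities of the two right-hand events and handle each integral over $\Omega'_\epsilon$ in turn.

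For the linear part, translation invariance together with isotropy (in particular $-X$ has the same law as $X$ under $\P_0$) identifies the probability of the first event with $\P_0\big(\sup_{s\le t}X^{(1)}_s\ge\eta\phi(x)/|\nabla\phi(x)|\big)$, which after the estimate $|\nabla\phi(x)|\le M_\epsilon$ depends on $x$ only through $\phi(x)$. Since $\partial\Omega_v=\phi^{-1}(v)$ and $|\nabla\phi|\ge m_\epsilon>0$ on $\Omega'_\epsilon$, the coarea formula yields
\[
\int_{\Omega'_\epsilon}\P_0\Big(\sup_{s\le t}X^{(1)}_s\ge\tfrac{\eta\phi(x)}{M_\epsilon}\Big)dx\ \le\ \frac{\sup_{0\le v\le\epsilon}\mathcal H^{d-1}(\partial\Omega_v)}{m_\epsilon}\int_0^\epsilon\P_0\Big(\sup_{s\le t}X^{(1)}_s\ge\tfrac{\eta v}{M_\epsilon}\Big)dv,
\]
and the change of variables $w=\eta v/M_\epsilon$ turns the last integral into $\tfrac{M_\epsilon}{\eta}\int_0^{\eta\epsilon/M_\epsilon}\P_0(\sup_{s\le t}X^{(1)}_s\ge w)\,dw$. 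Dividing by $\mu(t)$, splitting the $w$-range at $1$, and using \eqref{eq:lim1} of Lemma~\ref{lem:limits} on $(0,1)$ and Assumption~\ref{assump1} on $(1,\eta\epsilon/M_\epsilon)$, one gets $\limsup_{t\to0}\mu(t)^{-1}\int_0^{\eta\epsilon/M_\epsilon}\P_0(\sup_{s\le t}X^{(1)}_s\ge w)\,dw\le 1$; this produces the first summand $\sup_{0\le v\le\epsilon}\mathcal H^{d-1}(\partial\Omega_v)\,M_\epsilon/(\eta m_\epsilon)$.

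For the H\"older remainder, translation invariance and isotropy give $\P_x(\sup_{s\le t}|X_s-x|\ge r)=\P_0(\sup_{s\le t}|X_s|\ge r)$, so the same coarea estimate bounds the second contribution by $\tfrac{1}{m_\epsilon}\sup_{0\le v\le\epsilon}\mathcal H^{d-1}(\partial\Omega_v)\int_0^\epsilon\P_0\big(\sup_{s\le t}|X_s|\ge((1-\eta)v/L')^{1/(1+\kappa)}\big)dv$. With $A_t:=\sup_{s\le t}|X_s|$ and the identity $\int_0^\epsilon\P(Y\ge v)\,dv=\E[Y\wedge\epsilon]$ applied to $Y=\tfrac{L'}{1-\eta}A_t^{1+\kappa}$, the inner integral equals $\E_0\big[\tfrac{L'}{1-\eta}A_t^{1+\kappa}\wedge\epsilon\big]$; interpolating between the two trivial bounds $\tfrac{L'}{1-\eta}A_t^{1+\kappa}\wedge\epsilon\le\epsilon$ and $\le\tfrac{L'}{1-\eta}A_t^{1+\kappa}$, and then replacing $A_t$ by $A_t\wedge w_\epsilon$ (valid on the set where $\tfrac{L'}{1-\eta}A_t^{1+\kappa}\le\epsilon$, i.e. $A_t\le w_\epsilon:=((1-\eta)\epsilon/L')^{1/(1+\kappa)}$), one bounds this by a constant multiple of $(1-\eta)^{-\kappa/(1+\kappa)}\epsilon^{\kappa/(1+\kappa)}\,\E_0[A_t\wedge w_\epsilon]$. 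Then \eqref{eq:lim3} of Lemma~\ref{lem:limits}, together with Assumption~\ref{assump1} to pass from the truncation $\wedge\,w_\epsilon$ to the indicator $\mathbbm 1\{A_t\le w_\epsilon\}$, shows $\limsup_{t\to0}\mu(t)^{-1}\E_0[A_t\wedge w_\epsilon]<\infty$, and this yields the second summand, whose decisive feature is the vanishing factor $\epsilon^{\kappa/(1+\kappa)}$.

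Adding the two contributions via the union bound, dividing by $\mu(t)$, and passing to $\limsup_{t\to0}$ gives the asserted inequality. (If $\epsilon$ is so large that $m_\epsilon=0$ the right-hand side is $+\infty$ and nothing is to prove, so one may assume $\epsilon$ small enough that $\overline{\Omega'_\epsilon}$ lies in the neighborhood of $\partial\Omega$ on which $|\nabla\phi|$ is bounded below — precisely what makes the coarea steps legitimate.) I expect the delicate points to be the two passages to the limit after normalising by $\mu(t)$: the suprema $\sup_{s\le t}X^{(1)}_s$ and $\sup_{s\le t}|X_s|$ need not be integrable, and only their truncations at a fixed level are comparable to $\mu(t)$, so Assumption~\ref{assump1} and the truncated limits of Lemma~\ref{lem:limits} are doing all the real work; the exact constants in the two summands are then a matter of the change of variables and the interpolation bookkeeping above.
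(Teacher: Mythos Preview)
Your proof is correct and mirrors the paper's argument closely: the same $\eta$-splitting via Lemma~\ref{lem:taylor_bound}, the same coarea reduction of both integrals to one-dimensional integrals in the level parameter $v$, and the same appeal to Lemma~\ref{lem:limits} (specifically \eqref{eq:lim1} for the linear piece and a truncated-moment bound for the remainder) to control the $\mu(t)$-normalised limits. The only cosmetic deviations are that the paper invokes \eqref{eq:lim1} directly for any truncation level (your split at $w=1$ is unnecessary), and that your route through \eqref{eq:lim3} for the second summand yields a $d$-dependent constant (and retains the H\"older constant $L'$, which the paper silently absorbs) rather than the stated $2$; since that term carries the factor $\epsilon^{\kappa/(1+\kappa)}$ and is sent to zero in the proof of Theorem~\ref{thm:main2}, the exact constant is immaterial.
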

	\begin{proof}
		Fix any $\eta\in (0,1)$. For any $x\in \Omega'_\epsilon$, using Lemma~\ref{lem:taylor_bound} again we get
		\begin{align}
			&\ \ \ \ \P_x\left(\inf_{0\le s\le t}\phi(X_s)\le 0\right) \nonumber\\
			&\le \P_x\left(\sup_{0\le s\le t}\langle X_s-x,\nabla \phi(x)\rangle>\eta\phi(x)\right)+\P_x\left(\sup_{0\le s\le t}|X_s-x|^{1+\kappa}>(1-\eta)\phi(x)\right). \label{eq:0_e}
		\end{align}
		Using translation invariance and isotropy of $X$, and the co-area formula, integral of the first term on $\Omega'_\epsilon$ reduces to
		\begin{equation}\label{eq:co-area1}
			\begin{aligned}
				&\ \ \ \ \int_{\Omega'_\epsilon}\P_x\left(\sup_{0\le s\le t}\langle X_s-x,\nabla \phi(x)\rangle>\eta\phi(x)\right)dx \\
				&=\int_0^\epsilon\int_{\partial\Omega_v}\P_0\left(|\nabla \phi(x)|\overline{X}^{(1)}_t>\eta v\right)\frac{\mathcal{H}^{d-1}(dx)}{|\nabla \phi(x)|} dv.
			\end{aligned}
		\end{equation}
		Then, \eqref{eq:co-area1} yields
		\begin{align*}
			&\ \ \ \ \int_{\Omega'_\epsilon}\P_x\left(\sup_{0\le s\le t}\langle X_s-x,\nabla \phi(x)\rangle>\eta\phi(x)\right)dx \\
			&\le \sup_{0\le v\le \epsilon}\mathcal{H}^{d-1}(\partial\Omega_v)\frac{1}{m_\epsilon}\int_0^\epsilon \P_0\left(M_\epsilon \overline{X}^{(1)}_t>\eta v \right)dv \\
			&= \sup_{0\le v\le \epsilon}\mathcal{H}^{d-1}(\partial\Omega_v)\frac{M_\epsilon}{\eta m_\epsilon}\E_0\left(\overline{X}^{(1)}_t\wedge \frac{\eta\epsilon}{M_\epsilon}\right)
		\end{align*} 
		By \eqref{eq:lim1} in Lemma~\ref{lem:limits}, we get
		\begin{equation}\label{eq:limsup_3}
			\begin{aligned}
				&\limsup_{t\downarrow 0}\frac{1}{\mu(t)}\int_{\Omega'_\epsilon}\P_x\left(\sup_{0\le s\le t}\langle X_s-x,\nabla \phi(x)\rangle>\eta\phi(x)\right)dx \\
				&\le \sup_{0\le v\le \epsilon}\mathcal{H}^{d-1}(\partial\Omega_v)\frac{M_\epsilon}{\eta m_\epsilon}
			\end{aligned}
		\end{equation}
		To estimate the integral of the second term in \eqref{eq:0_e}, we again use co-area formula as follows:
		\begin{align}
			&\int_{\Omega'_\epsilon} \P_x\left(\sup_{0\le s\le t}|X_s-x|^{1+\kappa}>(1-\eta)\phi(x)\right) dx \nonumber \\
			&=\int_{\Omega'_\epsilon} \P_0\left(\sup_{0\le s\le t}|X_s|^{1+\kappa}>(1-\eta)\phi(x)\right)dx \nonumber \\
			&=\int_0^\epsilon\int_{\partial\Omega_v} \frac{\P_0\left(\sup_{0\le s\le t}|X_s|^{1+\kappa}>(1-\eta)v\right)}{|\nabla\phi(x)|}\mathcal{H}^{d-1}(dx) dv \nonumber\\
			&\le \sup_{0\le v\le \epsilon}\mathcal{H}^{d-1}(\partial\Omega_v)\frac{1}{(1-\eta)m_\epsilon}\E_0\left[\sup_{0\le s\le t}|X_s|^{1+\kappa}\wedge (1-\eta)\epsilon\right] \nonumber \\
			& \le 2\sup_{0\le v\le \epsilon}\mathcal{H}^{d-1}(\partial\Omega_v)\frac{(1-\eta)^{-\frac{\kappa}{\kappa+1}}}{m_\epsilon} \mu(t)\epsilon^{\frac{\kappa}{\kappa+1}}, \label{eq:0_e_2}
		\end{align}
		where in the last inequality we used 
		\begin{align*}
			&\E_0\left[\sup_{0\le s\le t}|X_s|^{1+\kappa}\wedge (1-\eta)\epsilon\right]=\E_0\left[\left(\sup_{0\le s\le t}|X_s|\wedge (1-\eta)^{\frac{1}{1+\kappa}}\epsilon^{\frac{1}{1+\kappa}}\right)^{\kappa+1}\right] \\
			&\le ((1-\eta)\epsilon)^{\frac{\kappa}{\kappa+1}}\E_0\left[\sup_{0\le s\le t}|X_s|\wedge (1-\eta)^{\frac{1}{1+\kappa}}\epsilon^{\frac{1}{1+\kappa}}\right]\le  2\mu(t)((1-\eta)\epsilon)^{\frac{\kappa}{\kappa+1}}
		\end{align*}
		for any $\epsilon\in (0,1)$. Since $m_\epsilon$ is bounded away from $0$ as $\epsilon\to 0$, \eqref{eq:0_e_2} implies
		\begin{equation}\label{eq:limsup_4}
			\begin{aligned}
				&\limsup_{t\downarrow 0}\frac{1}{\mu(t)}\int_{\Omega'_\epsilon} \P_x\left(\sup_{0\le s\le t}|X_s-x|^{1+\kappa}>(1-\eta)\phi(x)\right) dx \\
				&\le 2\sup_{0\le v\le \epsilon}\mathcal{H}^{d-1}(\partial\Omega_v)\frac{(1-\eta)^{-\frac{\kappa}{\kappa+1}}}{m_\epsilon} \epsilon^{\frac{\kappa}{\kappa+1}}.
			\end{aligned}
		\end{equation}
		Hence, the proof of the lemma follows after combining \eqref{eq:limsup_3} and \eqref{eq:limsup_4}.
	\end{proof}
	\begin{proof}[Proof of Theorem~\ref{thm:main2}] By Proposition~\ref{prop:liminf}, it suffices to prove that 
		\begin{align*}
			\limsup_{t\downarrow 0}\frac{\Vol(\Omega)-Q^X_\Omega(t)}{\mu(t)}\le \frac{\sup_{\partial\Omega}|\nabla\phi|}{\inf_{\partial\Omega}|\nabla \phi|}\Per(\Omega).
		\end{align*}
		For any $\epsilon>0$ we have
		\begin{equation}\label{eq:limsup_5}
			\begin{aligned}
				& \ \ \ \ \limsup_{t\downarrow 0}\frac{\Vol(\Omega)-Q^X_\Omega(t)}{\mu(t)}=\int_{\Omega}\P_x(\tau^X_\Omega\le t) dx \\
				&\le \limsup_{t\downarrow 0}\frac{1}{\mu(t)}\int_{\Omega_\epsilon}\P_x\left(\inf_{0\le s\le t}\phi(X_s)\le 0\right) dx  \\ &+\limsup_{t\downarrow 0}\frac{1}{\mu(t)}\int_{\Omega'_\epsilon}\P_x\left(\inf_{0\le s\le t}\phi(X_s)\le 0\right)dx.
			\end{aligned}
		\end{equation}
		By Lemma~\ref{lem:away}, the first term on the right hand side of \eqref{eq:limsup_5} is zero. Recalling Lemma~\ref{lem:E_phi_0_e}, we get
		\begin{align*}
			& \ \ \ \ \limsup_{t\downarrow 0}\frac{\Vol(\Omega)-Q^X_\Omega(t)}{\mu(t)} \\
			&\le 2\sup_{0\le v\le \epsilon}\mathcal{H}^{d-1}(\partial\Omega_v)\frac{M_\epsilon}{\eta m_\epsilon}+ 2\sup_{0\le v\le \epsilon}\mathcal{H}^{d-1}(\partial\Omega_v)\frac{(1-\eta)^{-\frac{\kappa}{\kappa+1}}}{m_\epsilon} \epsilon^{\frac{\kappa}{\kappa+1}}
		\end{align*}
		As $|\nabla\phi|$ is bounded in $\Omega'_\epsilon$ for sufficiently small $\epsilon>0$, using Lemma~\ref{lem:perimeter_phi}, we have
		\begin{align*}
			&\lim_{\epsilon\to 0}\sup_{0\le v\le \epsilon}\mathcal{H}^{d-1}(\partial\Omega_v)\frac{M_\epsilon}{\eta m_\epsilon}=\frac{1}{\eta}\frac{\sup_{\partial\Omega} |\nabla \phi|}{\inf_{\partial\Omega}|\nabla \phi|}\mathcal{H}^{d-1}(\partial\Omega), \\
			&\lim_{\epsilon\to 0}\sup_{0\le v\le \epsilon}\mathcal{H}^{d-1}(\partial\Omega_v)\frac{(1-\eta)^{-\frac{\kappa}{\kappa+1}}}{m_\epsilon} \epsilon^{\frac{\kappa}{\kappa+1}} = 0.
		\end{align*}
		Finally, letting $\eta\to 1$, we conclude that
		\begin{align*}
			\limsup_{t\downarrow 0}\frac{\Vol(\Omega)-Q^X_\Omega(t)}{\mu(t)}\le \frac{\sup_{\partial\Omega} |\nabla \phi|}{\inf_{\partial\Omega}|\nabla \phi|}\mathcal{H}^{d-1}(\partial\Omega)=\frac{\sup_{\partial\Omega} |\nabla \phi|}{\inf_{\partial\Omega}|\nabla \phi|}\Per(\Omega).
		\end{align*}
		This completes the proof of the theorem. 
	\end{proof}
	\subsection*{Concluding proof of Theorem~\ref{thm:main}} The final step of the proof of Theorem~\ref{thm:main} is to construct a function $\phi$ satisfying conditions of Theorem~\ref{thm:main2} for domains with $C^{1,1}$ boundary.
	\begin{lemma}\label{lem:level_set}
		Let $\Omega\subset\R^d$ be a bounded open set with $C^{1,1}$ boundary. Then there exists a bounded $C^{1,1}$ function $\phi:\R^d\to\R$ such that 
		\begin{enumerate}
			\item $\Omega = \{x: \phi(x)>0\}$,
			\item $|\nabla \phi(x)|=1$ in a neighborhood of $\partial\Omega$.
		\end{enumerate}
		
	\end{lemma}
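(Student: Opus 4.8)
\subsection*{Proof sketch for Lemma~\ref{lem:level_set}}

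The plan is to take $\phi$ to be a bounded smooth truncation of the signed distance function to $\partial\Omega$. Recall that for a bounded open set with $C^{1,1}$-boundary — which by \cite[Corollary~2]{LewickaPeres2020} is equivalent to the uniform interior and exterior ball condition — the signed distance function
\begin{align*}
	d_\Omega(x):=\dist(x,\R^d\setminus\Omega)-\dist(x,\Omega)
\end{align*}
is globally $1$-Lipschitz on $\R^d$, and satisfies $d_\Omega>0$ precisely on $\Omega$, $d_\Omega=0$ on $\partial\Omega$, and $d_\Omega<0$ on $\R^d\setminus\overline\Omega$ (these three facts use only that $\Omega$ is open). The one substantive input is the classical regularity statement for the distance function: there is $\rho_0>0$ such that $d_\Omega$ is $C^{1,1}$ on the tubular neighborhood $N:=\{x:|d_\Omega(x)|<\rho_0\}$, with $|\nabla d_\Omega|\equiv1$ on $N$ (the gradient being the inward unit normal furnished by the nearest-point projection onto $\partial\Omega$), and with $\nabla d_\Omega$ Lipschitz on $N$; for a bounded $C^{1,1}$ domain the uniform ball radius is uniform, so the Lipschitz constant of $\nabla d_\Omega|_N$ is finite. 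I would cite this from the standard theory of the distance function to $C^{1,1}$ hypersurfaces.

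Next I would fix a nondecreasing $\chi\in C^\infty(\R)$ with $\chi(s)=s$ for $|s|\le\rho_0/4$, $\chi(s)=\rho_0/2$ for $s\ge\rho_0/2$, and $\chi(s)=-\rho_0/2$ for $s\le-\rho_0/2$, so that $\chi$ and all its derivatives are bounded, and set $\phi:=\chi\circ d_\Omega$ on $\R^d$. Then $|\phi|\le\rho_0/2$, so $\phi$ is bounded. Since $\chi$ is nondecreasing and coincides with the identity near $0$, one has $\chi(s)>0\iff s>0$; hence $\{\phi>0\}=\{d_\Omega>0\}=\Omega$, which is item (1). On the open neighborhood $\{|d_\Omega|<\rho_0/4\}$ of $\partial\Omega=\{d_\Omega=0\}$ we have $\phi=d_\Omega$, so $|\nabla\phi|\equiv1$ there, which is item (2).

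It then remains to verify that $\phi\in C^{1,1}(\R^d)$. On the open set $N$, $\phi$ is the composition of the $C^{1,1}$ function $d_\Omega$ with the smooth bounded $\chi$; differentiating gives $\nabla\phi=\chi'(d_\Omega)\,\nabla d_\Omega$, and adding and subtracting $\chi'(d_\Omega(x))\nabla d_\Omega(y)$ yields
\begin{align*}
	|\nabla\phi(x)-\nabla\phi(y)|\le \|\chi'\|_\infty\,\mathrm{Lip}\bigl(\nabla d_\Omega|_N\bigr)\,|x-y|+\|\chi''\|_\infty\,|x-y|,
\end{align*}
using $|\nabla d_\Omega|\equiv1$ and that $d_\Omega$ is $1$-Lipschitz; thus $\nabla\phi$ is Lipschitz on $N$. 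On the complementary open sets $\{d_\Omega>\rho_0/2\}$ and $\{d_\Omega<-\rho_0/2\}$, which together with $N$ cover $\R^d$, $\phi$ is locally constant; the three local definitions agree on overlaps, so $\phi$ is $C^1$ on $\R^d$ with $\nabla\phi$ vanishing outside the compact set $\overline N$. A bounded, compactly supported, locally Lipschitz function is globally Lipschitz, so $\nabla\phi$ is Lipschitz on $\R^d$, completing the proof.

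The only genuinely nontrivial ingredient is the $C^{1,1}$-regularity of the signed distance function in a tubular neighborhood of a $C^{1,1}$ boundary (equivalently, of the nearest-point projection), which is exactly where the uniform interior and exterior ball condition enters; the construction of $\chi$ and the verification of (1)–(2) and of the global Lipschitz bound on $\nabla\phi$ are routine.
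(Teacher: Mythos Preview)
Your proof is correct and follows essentially the same approach as the paper: both take $\phi$ to be a bounded $C^{1,1}$ truncation $\chi\circ d_\Omega$ (respectively $\varphi\circ\delta$) of the signed distance function, relying on the fact that the signed distance is $C^{1,1}$ with unit gradient in a tubular neighborhood of a $C^{1,1}$ boundary. Your write-up is in fact a bit more careful than the paper's---you specify that the truncation is nondecreasing (needed for $\{\phi>0\}=\Omega$) and you spell out why the composition is globally $C^{1,1}$ rather than just on the tube.
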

	\begin{proof}
		Consider the signed distance function
		\begin{align*}
			\delta(x)=\begin{cases}
				d(x,\partial\Omega) & \mbox{if $x\in\Omega$} \\
				-d(x,\partial\Omega) & \mbox{if $x\in\Omega^c$}.
			\end{cases}
		\end{align*}
		Since $\Omega$ has $C^{1,1}$ boundary, there exists $r_0>0$ such that $\delta$ is $C^{1,1}$ in $\Omega'_{r_0}:=\{x\in\R^d: |\delta(x)|<r_0\}$ and $|\nabla \delta|=1$ in $\Omega'_{r_0}$. Fix $0<r<r_0$ and define a $C^{1,1}$ function $\varphi:\R\to\R$ 
		\begin{align*}
			\varphi(x)=\begin{cases}
				x & \mbox{if $|x|<r/2$} \\
				r & \mbox{if $x\ge r$} \\
				-r & \mbox{if $x\le -r$}.
			\end{cases}
		\end{align*}
		 Consider the function $\phi=\varphi\circ \delta$. As $\delta$ is $C^{1,1}$ in $\Omega'_{r_0}$ and $\phi$ is constant on $\{x: \delta(x)\ge r\}$, $\phi$ is a bounded $C^{1,1}$ function on $\mathbb R^d$. Also, $\phi(x)>0$ if and only if $x\in\Omega$. For any $x\in\Omega'_{r/2}$, $|\nabla \phi(x)|=|\varphi'(\delta(x)) \nabla \delta(x)|=1$. This completes the proof of the lemma.
	\end{proof}
	\begin{remark} \label{rem:C11}
		The $C^{1,1}$ regularity assumption on $\partial\Omega$ cannot be weakened. From \cite[Corollary~6]{NikolovPascal2025} it is known that the differentiability of $\delta$ in a neighborhood of $\partial\Omega$ implies that $\partial\Omega$ is $C^{1,1}$.
	\end{remark}
	We note that $\phi$ in Lemma~\ref{lem:level_set} satisfies the conditions of Theorem~\ref{thm:main2} with $\kappa=1$, and $\Omega=\phi^{-1}(0,\infty)$. Also, $|\nabla \phi|=1$ on $\partial\Omega$. Therefore, Theorem~\ref{thm:main} follows from Theorem~\ref{thm:main2}. \qed
	\begin{proof}[Proof of Proposition~\ref{corr1}] Since $\P_0(\overline{X}^{(1)}_t\ge 0)=1$, for any $\epsilon>0$, using Markov inequality we obtain
		\begin{align*}
			\P_0\left(\overline{X}^{(1)}_t>\epsilon\right)\le \frac{\E_0\left[\left(\overline{X}^{(1)}_t\right)^p\right]}{\epsilon^p}.
		\end{align*}
	Also, from the given conditions we have
		\begin{align*}
			0\le 1-\frac{\mu(t)}{m(t)}\le \frac{\E_0\left[\overline{X}^{(1)}_t\mathbbm{1}\left\{\overline{X}^{(1)}_t>1\right\}\right]}{m(t)}\le \frac{\E_0\left[\left(\overline{X}^{(1)}_t\right)^p\right]}{m(t)},
		\end{align*}
		and the right hand side goes to $0$ as $t\downarrow 0$, where $m(t)=\E_0[\overline{X}^{(1)}_t]$. Hence, $\mu(t)/m(t)\to 1$ as $t\downarrow 0$ and Assumption~\ref{assump1} is satisfied. The proof is concluded by applying Theorem~\ref{thm:main}.
	\end{proof}

\end{document}